\begin{document}

\title{Decentralized Control for Discrete-time Mean-Field Systems with Multiple Controllers of Delayed Information}

\author[1]{Qingyuan Qi}

\author[2]{Zhiqiang Liu}

\author[2]{Qianqian Zhang}

\author[1]{Xinbei Lv*}
\authormark{QI \textsc{et al.}}

\address[1]{\orgdiv{Qingdao Innovation and Development Center}, \orgname{Harbin Engineering University}, \orgaddress{\state{Qingdao}, \country{China}}}

\address[2]{\orgdiv{Institute of Complexity Science, School of Automation}, \orgname{Qingdao University}, \orgaddress{\state{Qingdao}, \country{China}}}

\corres{Xinbei Lv, Qingdao Innovation and Development Center, Harbin Engineering University, Qingdao, China 266000.\\
\email{lvxinbei@hrbeu.edu.cn}}

\abstract[Abstract]{In this paper, the finite horizon asymmetric information linear quadratic (LQ) control problem is investigated for a discrete-time mean field system. Different from previous works, multiple controllers with different information sets are involved in the mean field system dynamics. The coupling of different controllers makes it quite difficult in finding the optimal control strategy. Fortunately, by applying the Pontryagin's maximum principle, the corresponding decentralized control problem of the finite horizon is investigated. The contributions of this paper can be concluded as: For the first time, based on the solution of a group of mean-field forward and backward stochastic difference equations (MF-FBSDEs), the necessary and sufficient solvability conditions are derived for the asymmetric information LQ control for the mean field system with multiple controllers. Furthermore, by the use of an innovative orthogonal decomposition approach, the optimal decentralized control strategy is derived, which is based on the solution to a non-symmetric Riccati-type equation.}

\keywords{Pontryagin's maximum principle, asymmetric information control, mean-field system, orthogonal decomposition approach}

\maketitle

%\footnotetext{\textbf{Abbreviations:} ANA, anti-nuclear antibodies; APC, antigen-presenting cells; IRF, interferon regulatory factor}

\section{Introduction}
Different from the controlled linear stochastic differential/difference equations (SDEs) studied in classical stochastic control problem, the mathematical expectation terms appear in the mean-field SDEs. Due to the applications in large population stochastic dynamics games, and various physical and sociological dynamical systems, the study of mean-field SDEs can be traced back to 1950s, and abundant research results have been obtained, see \citep{k1956}-\citep{qxz2021}. Particularly, the continuous time mean-field LQ control problem was firstly studied in \citep{y2013}, and a necessary and sufficient solvability condition was proposed. Furthermore, both the finite horizon LQ control problem and the infinite horizon stabilization control problem for discrete time mean-field systems were solved in \citep{zq2016, zqf2019}. Moreover, the indefinite LQ control for mean-field systems is investigated in \citep{wz2021}, \citep{lly2020}.

It is important to note that the previously mentioned studies on mean-field control problem are mainly symmetric information control problem, only a centralized control strategy needs to be designed. As is well known, the traditional centralized control problem with one single controller were well studied since the last century. In addition, for the system dynamics with multiple controllers of the same information structure, the corresponding decentralized control problems can be converted into a centralized control problem by using the system augmentation approach. Decentralized control systems have multiple controllers that are collaboratively trying to control a system by taking actions based on their individual observations. The observations of one controller may not be available to the other controllers. Whereas, in contrast to centralized control systems, the decentralized control problem with controllers of asymmetric information structure remains less investigated.

We should emphasis that real world systems usually contain multiple controllers, and each controller accesses its individual information. For this situation, finding optimal control strategy is usually difficult in view of the coupling of different controllers. The pioneering study of asymmetric information control is the well-celebrated Witsenhausen's counterexample raised in 1968 (see \citep{wh1968}), which shows that the linear control strategy is no longer optimal for a decentralized control of linear dynamical system, and the associated optimal control problem remains unsolved. Since then, in view of the wide applications in many fields, the research of decentralized control with asymmetric information controllers has attracted much interest from researchers in recent years. For example, the stochastic game problem with asymmetric information was investigated and solved in \citep{nglb2013,gnlb2014}. The optimal local and remote decentralized control problem was investigated via the common information approach, see \citep{lx2018, aon2019}. Besides, the decentralized control for linear stochastic system with multiple controllers of different information structures was studied in \citep{lz2016}-\citep{qxz2020}.

Different from the previous works mentioned above, we will investigate a special kind of decentralized control problem for discrete-time mean-field systems. Specifically, multiple controllers with delayed information patterns as well as the mean-field terms (i.e., the expectations of the controllers and the state) are contained within the system dynamics. Meanwhile, each controller can access its individual information, which is different with each other. Our goal is to design the optimal decentralized control strategy to minimize a given quadratic cost function. It should be pointed out that the studied problem has not been solved so far. The existence of mean-field terms and the delayed asymmetric information structure make the studied problem challenging in the following aspects: 1) In view of the asymmetric information structure, the controllers are coupled with each other, hence finding the optimal decentralized control is difficult, see \citep{lx2018, aon2019}; 2) Due to the existence of the mean-filed terms, the original optimal decentralized control problem cannot be considered as a standard LQ control problem, and the explicit solvability conditions have not been derived, see \citep{y2013}.

In this paper, by applying the maximum principle, the corresponding decentralized control problem for mean-field systems with controllers of delayed information structures is well studied. In the first place, by the use of the variational method, the necessary and sufficient solvability conditions are given in accordance with a group of MF-FBSDEs. Subsequently, in order to decouple the associated MF-FBSDEs, an innovative orthogonal decomposition approach is proposed. Hence, in view of the delayed information structure, it is shown that the MF-FBSDEs are decoupled, and the relationship between the costate and the state is established. Finally, the optimal decentralized control strategy is derived by introducing asymmetric Riccati equations.

Very recently, the optimal LQ control problem with multiple controllers of non-symmetric information structure was  investigated in \citep{qxz2020}.  {\color{blue}It is worth noting that the adopted methodology exhibits innovation in the following aspects: Firstly, to deal with the mean field terms in mean-field system, a group of mean-field forward and backward stochastic difference equations (MF-FBSDEs) is firstly introduced. Obviously, the MF-FBSDEs were not mentioned in reference \citep{qxz2020}, and we show that the optimal decentralized control can be uniquely solved if and only if the MF-FBSDEs can be uniquely decoupled. Secondly, as a result, in order to obtain the optimal decentralized control for mean-field systems, we introduce a novel technique for decoupling the MF-FBSDEs. Specifically, we derive the control mathematical expectation first, followed by obtaining the optimal decentralized control. It is worth noting that this approach has not been utilized in reference \citep{qxz2020}. Furthermore, this paper demonstrates that the optimal decentralized control can be computed by solving a set of new asymmetric Riccati equations.}

{\color{blue}The contributions of the paper are: 1)   The necessary and sufficient conditions for the discrete-time mean field decentralized control problem with multiple controllers of delayed information patterns are derived. 2) Utilizing a novel orthogonal decomposition approach, we have successfully obtained the solution to the group of MF-FBSDEs, which, to the best of our knowledge, represents a new and original contribution. 3) With regards to the optimal predictor, we have successfully overcome the challenges associated with the coupling between multiple controllers, enabling us to derive the optimal decentralized control for the first time.}

The necessary and sufficient conditions for the discrete-time mean field decentralized control problem with multiple controllers of delayed information patterns are derived. 2) According to a novel orthogonal decomposition approach, the solution to the group of MF-FBSDEs is obtained, which is new as far as our knowledge. 3) In terms of the optimal predictor, the barrier of the coupling between the multiple controllers are overcome, the optimal decentralized control are thus derived for the first time.

The structure of the paper is as follows. In Section 2, the problem under consideration is formulated, while the solvability conditions of the problem is shown in Section 3. The optimal decentralized control strategy is developed in Section 4. A numerical example is given in Section 5, and we conclude the paper in Section 6. Finally, there are some relevant detailed proofs in Appendix.

\textbf{Notations}: $I_{n}$ means the unit matrix with rank $n$; Superscript $\mathcal{A}^{T}$ denotes the transpose of a matrix. Real symmetric matrix $\mathcal{A}>0 \ (\text{or}\geq0)$ implies that $\mathcal{A}$ is strictly positive definite (or positive semi-definite). $\mathbb{R}^{n}$ signifies the $n$-dimensional Euclidean space. $\mathcal{B}^{-1}$ represents the inverse of real matrix $\mathcal{B}$. Denote the natural filtration $\mathcal{F}_{i}(\tau)$ as an the $\sigma$-algebra generated by $\{x(0),\omega(0),\cdots,\omega(\tau-i-1)\}$ and augmented by all the $\mathcal{P}$-null sets. $\mathbb{E}[\cdot|\mathcal{F}_{i}(\tau)]$ means the conditional expectation with respect to $\mathcal{F}_{i}(\tau)$.
For the convenience of presentation, the following concise notations are introduced.
\begin{equation}
\mathbf{R}=\begin{bmatrix}
 \mathcal{R}_{0}& 0 & \cdots  & 0\\
 0&  \mathcal{R}_{1}& \cdots  & 0\\
 \vdots & \vdots  & \ddots  & \vdots \\
 0& 0 & \cdots  &\mathcal{R}_{h}
\end{bmatrix},
\mathbf{\bar{R}}=\begin{bmatrix}
 \bar{\mathcal{R}}_{0}& 0 & \cdots  & 0\\
 0&  \bar{\mathcal{R}}_{1}& \cdots  & 0\\
 \vdots & \vdots  & \ddots  & \vdots \\
 0& 0 & \cdots  &\bar{\mathcal{R}}_{h}
\end{bmatrix},\notag\\
\end{equation}
\begin{equation}
\mathbf{B}=\begin{bmatrix}
\mathcal{B}{_{0}}^{T}\\
\mathcal{B}{_{1}}^{T}\\
\vdots \\
\mathcal{B}{_{h}}^{T}
\end{bmatrix}^{T},
\mathbf{\bar{B}}=\begin{bmatrix}
\bar{\mathcal{B}}{_{0}}^{T}\\
\bar{\mathcal{B}}{_{1}}^{T}\\
\vdots \\
\bar{\mathcal{B}}{_{h}}^{T}
\end{bmatrix}^{T},
\mathbf{D}=\begin{bmatrix}
D{_{0}}^{T}\\
D{_{1}}^{T}\\
\vdots \\
D{_{h}}^{T}
\end{bmatrix}^{T},
\mathbf{\bar{D}}=\begin{bmatrix}
\bar{D}{_{0}}^{T}\\
\bar{D}{_{1}}^{T}\\
\vdots \\
\bar{D}{_{h}}^{T}
\end{bmatrix}^{T},\notag\\
\end{equation}
\begin{equation}
\mathbf{V}(\tau)=\begin{bmatrix}
v{_{0}}(\tau)\\
v{_{1}}(\tau)\\
\vdots \\
v{_{h}}(\tau)
\end{bmatrix},
\Delta \mathbf{V}(\tau)=\begin{bmatrix}
\Delta v{_{0}}(\tau)\\
\Delta v{_{1}}(\tau)\\
\vdots \\
\Delta v{_{h}}(\tau)
\end{bmatrix},
\mathbb{E}\mathbf{V}(\tau)=\begin{bmatrix}
\mathbb{E}v{_{0}}(\tau)\\
\mathbb{E}v{_{1}}(\tau)\\
\vdots \\
\mathbb{E}v{_{h}}(\tau)
\end{bmatrix},
\mathbb{E}\Delta \mathbf{V}(\tau)=\begin{bmatrix}
\mathbb{E}\Delta v{_{0}}(\tau)\\
\mathbb{E}\Delta v{_{1}}(\tau)\\
\vdots \\
\mathbb{E}\Delta v{_{h}}(\tau)
\end{bmatrix},\notag\\
\end{equation}
\begin{equation}
\mathcal{A}(\tau)=\mathcal{A}+\omega (\tau)C,\bar{\mathcal{A}}(\tau)=\bar{\mathcal{A}}+\omega (\tau)\bar{C},\notag\\
\end{equation}
\begin{equation}
\mathcal{B}_{i}(\tau)=\mathcal{B}_{i}+\omega (\tau)D_{i},\bar{\mathcal{B}}_{i}(\tau)=\bar{\mathcal{B}}_{i}+\omega (\tau)\bar{D}_{i},\notag\\
\end{equation}
\begin{equation}
\mathbf{B}(\tau)=\mathbf{B}+\omega (\tau)\mathbf{D},\bar{\mathbf{B}}(\tau)=\bar{\mathbf{B}}+\omega (\tau)\bar{\mathbf{D}},\notag\\
\end{equation}

\begin{equation}
\mathbf{R}_{i}=\begin{bmatrix}
 \mathcal{R}_{0}& 0 & \cdots  & 0\\
 0& \mathcal{R}_{1} & \cdots  & 0\\
\vdots  & \vdots  & \ddots  & \vdots \\
 0& 0 & \cdots  & \mathcal{R}_{i}
\end{bmatrix},
\mathbf{\bar{R}}_{i}=\begin{bmatrix}
\bar{\mathcal{R}}_{0}& 0 & \cdots  & 0\\
 0& \bar{\mathcal{R}}_{0} & \cdots  & 0\\
\vdots  & \vdots  & \ddots  & \vdots \\
 0& 0 & \cdots  & \bar{\mathcal{R}}_{i}
\end{bmatrix},\notag\\
\end{equation}
\begin{equation}\label{equ:1}
\mathbf{V}_{i}(\tau)=\begin{bmatrix}
\hat{v}_{0,i}(\tau)\\
\vdots \\
\hat{v}_{i-1,i}(\tau)\\
v_{i}(\tau)
\end{bmatrix},
\mathbb{E}\mathbf{V}_{i}(\tau)=\begin{bmatrix}
\mathbb{E}\hat{v}_{0,i}(\tau)\\
\vdots \\
\mathbb{E}\hat{v}_{i-1,i}(\tau)\\
\mathbb{E}v_{i}(\tau)
\end{bmatrix},
\end{equation}
\begin{equation}
\mathbf{B}_{i}=\begin{bmatrix}
\mathcal{B}_{0}^{T}\\
\mathcal{B}_{1}^{T}\\
\vdots \\
\mathcal{B}_{i}^{T}
\end{bmatrix},
\mathbf{\bar{B}}_{i}=\begin{bmatrix}
\bar{\mathcal{B}}_{0}^{T}\\
\bar{\mathcal{B}}_{1}^{T}\\
\vdots \\
\bar{\mathcal{B}}_{i}^{T}
\end{bmatrix},
\mathbf{D}_{i}=\begin{bmatrix}
D_{0}^{T}\\
D_{1}^{T}\\
\vdots \\
D_{i}^{T}
\end{bmatrix},
\mathbf{\bar{D}}_{i}=\begin{bmatrix}
\bar{D}_{0}^{T}\\
\bar{D}_{1}^{T}\\
\vdots \\
\bar{D}_{i}^{T}
\end{bmatrix},\notag\\
\end{equation}
\begin{equation}
\mathbf{B}_{i}(\tau)=\mathbf{B}_{i}+\omega(\tau)\mathbf{D}_{i},
\mathbf{\bar{B}}_{i}(\tau)=\mathbf{\bar{B}}_{i}+\omega(\tau)\mathbf{\bar{D}}_{i}, \text{for}\ i=0,\cdots,h.\notag
\end{equation}

\section{Problem Formulation}
We consider the following discrete-time mean-field stochastic system with multiple controllers
\begin{align}\label{equ:2}
x(\tau+1)=\Big[\mathcal{A}x(\tau)+\bar{\mathcal{A}}\mathbb{E}x(\tau)+\sum_{i=0}^{h}[\mathcal{B}_{i}v_{i}(\tau)+\bar{\mathcal{B}}_{i}\mathbb{E}v_{i}(\tau)]\Big]+\Big[Cx(\tau)+\bar{C}\mathbb{E}x(\tau)+\sum_{i=0}^{h}[D_{i}v_{i}(\tau)+\bar{D}_{i}\mathbb{E}v_{i}(\tau)]\Big]\omega (\tau),
\end{align}
where $\tau$ is the time instant, $x(\tau)\in{\mathbb{R}^n}$ is the system state, $v_{i}(\tau)\in \mathbb{R}^{m_{i}}$ is the $i$-th control input. $\mathcal{A},\bar{\mathcal{A}},C,\bar{C}\in \mathbb{R}^{n\times n}$, and $\mathcal{B}_{i},\bar{\mathcal{B}}_{i},D_{i},\bar{D}_{i}\in \mathbb{R}^{n\times m}, 0\leq i\leq h$ are matrices of appropriate dimensions, $\omega(\tau)$ is a scalar-valued Gaussian white noise with $\omega(\tau)\sim \mathcal{N}(0, \sigma^{2})$. The initial state $x(0)=\beta$ is given, and $\mathbb{E}$ is the expectation taken over the noise $\omega(\tau)$ and initial state $\beta$.

To guarantee the solvability of system \eqref{equ:2}, the initial control strategies $v_{i}(\tau) (0\leq i\leq h,0\leq \tau\leq i-1)$ are given arbitrarily.

It is noted that the expectations $\mathbb{E}x(\tau), \mathbb{E}v_{i}(\tau)$ are involved in the system dynamics \eqref{equ:2}, which will cause essential difficulties in deriving the optimal decentralized control strategy. For the sake of discussion, it can be derived from \eqref{equ:2} that
\begin{align}\label{equ:3}
\mathbb{E}x(\tau+1)=(\mathcal{A}+\bar{\mathcal{A}})\mathbb{E}x(\tau)+\sum_{i=0}^{h}(\mathcal{B}_{i}+\bar{\mathcal{B}}_{i})\mathbb{E}v_{i}(\tau).
\end{align}

Throughout this paper, for (\ref{equ:2})-(\ref{equ:3}), the following assumption on the information structure is made:
\begin{assumption}\label{asss}
The $i$-th controller $v_{i}(\tau)$ is $\mathcal{F}_{i}(\tau)$-measurable,
in which $\mathcal{F}_{i}(\tau)$ is subject to
$\mathcal{F}_{i}(\tau)=\sigma\{\beta,\omega(0),\cdots,\omega(\tau-i-1)\}.$
\end{assumption}

Clearly, for $0\leq i\leq h$ and $i\leq \tau\leq \Gamma$, {we can infer}
$\mathcal{F}_{0}(\tau)\supset \mathcal{F}_{1}(\tau)\supset \cdots \supset \mathcal{F}_{h}(\tau),h\leq \tau, \text{and}\ \mathcal{F}_{i}(\Gamma)\supset \mathcal{F}_{i}(\Gamma-1)\supset \cdots \supset\mathcal{F}_{i}(\tau), i\leq \tau.
$
\begin{remark}\label{remark1}
For mean-field system \eqref{equ:2}, $h+1$ controllers are involved, and each controller can access its individual information, which is different with each other. Such kind of system is called asymmetric information stochastic system, and the corresponding optimal control problem turns into a decentralized control problem, and finding the optimal decentralized control strategy is usually difficult, see \citep{lx2018,aon2019,qxz2020,wfll2022, wh2021, nglb2013}.
\end{remark}

Corresponding with  system (\ref{equ:2})-\eqref{equ:3}, {the cost functional is given by}
\begin{align}\label{equ:4}
 J_{\Gamma}(v)=&\sum_{\tau=0}^{\Gamma}\mathbb{E}\Big[x^T(\tau)Qx(\tau)+[\mathbb{E}x(\tau)]^T\bar{Q}\mathbb{E}x(\tau)+\sum_{i=0}^{h}[v_{i}^T(\tau)\mathcal{R}_{i}v_{i}(\tau)+[\mathbb{E}v_{i}(\tau)]^T\bar{\mathcal{R}}_{i}\mathbb{E}v_{i}(\tau)]\Big]\notag\\
&+\mathbb{E}[x^T(\Gamma+1)\Phi(\Gamma+1)x(\Gamma+1)]+[\mathbb{E}x(\Gamma+1)]^T\bar{\Phi}(\Gamma+1)\mathbb{E}x(\Gamma+1),
\end{align}
where $Q,\bar{Q},\mathcal{R}_{i},\bar{\mathcal{R}}_{i},\Phi(\Gamma+1),\bar{\Phi}(\Gamma+1)$ are deterministic symmetric weighting matrices with appropriate dimensions.

For the weighting matrices in \eqref{equ:4}, we might as well assume:
\begin{assumption}\label{assumption:1}
$Q\geq0,Q+\bar{Q}\geq0,\mathcal{R}_{i}>0,\mathcal{R}_{i}+\bar{\mathcal{R}}_{i}>0,\Phi(\Gamma+1)\geq0,\Phi(\Gamma+1)+\bar{\Phi}(\Gamma+1)\geq0.$
\end{assumption}

{\color{blue}{ In what follows, we will present an introduction to the decentralized control problem of mean-field system with multiple controllers of delayed information.}}
\begin{problem}\label{problem:1}
Find $\mathcal{F}_{i}(\tau)$-measurable controllers
$v_{i}(\tau)\in \mathbb{R}^{m_{i}}, i=0,\cdots,h$ to minimize \eqref{equ:4}.
\end{problem}

\begin{remark}\label{remark2}
The solution to Problem \ref{problem:1} is hard to obtain, and the optimal decentralized control strategy has not been derived before. The reasons are twofold: 1) The information set available to each controller is different, which results in the coupling of $h+1$ controllers, and makes the finding of optimal decentralized control strategy difficult. 2) The mathematical expectation terms $\mathbb{E}x(\tau), \mathbb{E}v_{i}(\tau)$ appear in \eqref{equ:2}-\eqref{equ:4}, which destroys the adaptability of the control inputs, and consequently, Problem \ref{problem:1} cannot be solved by applying traditional methods such as system augmentation.
\end{remark}

\section{Existence of Optimal Decentralized Control Strategy}
We are going to consider the existence of the solution to Problem \ref{problem:1} via the variational method. To begin with, the following lemma shall be given, which serves as the preliminary.
\begin{lemma}\label{lemma:1}
For (\ref{equ:2}) and (\ref{equ:4}), set $\varepsilon\in\mathbb{R}$, and denote $v_{i}^{\varepsilon}(\tau)=v_{i}(\tau)+\varepsilon\Delta v_{i}(\tau)$, $0\leq i\leq h,i\leq \tau\leq \Gamma$, in which $\Delta v_{i}(\tau)$ is $\mathcal{F}_i(\tau)$-adapted with $\sum_{\tau=0}^{\Gamma}\Delta {v_{i}}^{T}(\tau)\Delta v_{i}(\tau)< +\infty$. We have
\begin{align}\label{equ:8}
J_{\Gamma}(v^\varepsilon)&- J_{\Gamma}(v)=\varepsilon^{2}\Delta J_{\Gamma}(\Delta v)\notag\\
&+2\varepsilon\sum_{t=0}^{\Gamma}\mathbb{E}\Big[\big[\mathbf{B}^{T}(\tau)\theta(\tau)+\mathbb{E}[\mathbf{\bar{B}}^{T}(\tau)\theta(\tau)]+\mathbf{R}\mathbf{V}(\tau)+\mathbf{\bar{R}}\mathbb{E}\mathbf{V}(\tau)\big]^{T}\Delta \mathbf{V}(\tau)\Big].
\end{align}
In the above, $x^{\varepsilon}(\tau)$ and $J_{\Gamma}(v^{\varepsilon})$ are the state variable and the cost functional corresponding with $v_{i}^{\varepsilon}(\tau)$, respectively. Moreover, $\Delta J_{\Gamma}(\Delta v)$ can be calculated as
\begin{align}\label{equ:9}
 \Delta J_{\Gamma}(\Delta v)&=\sum_{\tau=0}^{\Gamma}\mathbb{E}\big[\eta^{T}(\tau)Q\eta(\tau)+[\mathbb{E}\eta(\tau)]^{T}\bar{Q}\mathbb{E}\eta(\tau)+\Delta \mathbf{V}^{T}(\tau)\mathbf{R}\Delta \mathbf{V}(\tau)+[\mathbb{E}\Delta \mathbf{V}(\tau)]^{T}\bar{R}\mathbb{E}\Delta \mathbf{V}(\tau)\big]\notag\\
&+\mathbb{E}[\eta^{T}(\Gamma+1)\Phi(\Gamma+1)\eta(\Gamma+1)]+[\mathbb{E}\eta(\Gamma+1)]^{T}\bar{\Phi}(\Gamma+1)\mathbb{E}\eta(\Gamma+1),
\end{align}
in which $\eta(\tau)=\frac{x^{\varepsilon}(\tau)-x(\tau)}{\varepsilon}$, and the costate $\theta(\tau)$ satisfies the following backward iteration
\begin{align}\label{equ:11}
\theta(\tau-1)=&Qx(\tau)+\bar{Q}\mathbb{E}x(\tau)+\mathbb{E}[\mathcal{A}^{T}(\tau)\theta(\tau)|\mathcal{F}_{0}(\tau)]+\mathbb{E}[\bar{\mathcal{A}}^{T}(\tau)\theta(\tau)],
\end{align}
with terminal condition $\theta(\Gamma)=\Phi(\Gamma+1)x(\Gamma+1)+\bar{\Phi}(\Gamma+1)\mathbb{E}x(\Gamma+1),$\ $\eta(\tau+1)$ satisfies the following iteration
\begin{align}\label{equ:10}
\eta(\tau+1)=&\mathcal{A}(\tau)\eta(\tau)+\bar{\mathcal{A}}(\tau)\mathbb{E}\eta(\tau)+\mathbf{B}(\tau)\Delta \mathbf{V}(\tau)+\bar{\mathbf{B}}(\tau)\mathbb{E}\Delta \mathbf{V}(\tau), \eta(0)=0.
\end{align}
\end{lemma}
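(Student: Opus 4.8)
The plan is to exploit the \emph{exact} linearity of \eqref{equ:2}: since the map $v\mapsto x$ is affine and the initial state $x(0)=\beta$ is fixed, replacing $v_i$ by $v_i^\varepsilon=v_i+\varepsilon\Delta v_i$ produces $x^\varepsilon(\tau)=x(\tau)+\varepsilon\eta(\tau)$, where $\eta$ solves \eqref{equ:10} with $\eta(0)=0$, and this holds with \emph{no} higher‑order remainder; taking expectations, $\mathbb{E}x^\varepsilon(\tau)=\mathbb{E}x(\tau)+\varepsilon\mathbb{E}\eta(\tau)$. The integrability hypothesis on $\Delta v_i$ guarantees that all the expectations below are finite. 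Substituting these into \eqref{equ:4}, expanding each quadratic form by the symmetry of $Q,\bar Q,\mathcal{R}_i,\bar{\mathcal{R}}_i,\Phi(\Gamma+1),\bar\Phi(\Gamma+1)$, and collecting powers of $\varepsilon$, the $\varepsilon^2$‑coefficient is by direct inspection the functional $\Delta J_\Gamma(\Delta v)$ of \eqref{equ:9}, while the $\varepsilon$‑coefficient equals $2\Sigma$, where
\begin{align}
\Sigma=&\sum_{\tau=0}^{\Gamma}\mathbb{E}\Big[x^T(\tau)Q\eta(\tau)+(\mathbb{E}x(\tau))^T\bar Q\,\mathbb{E}\eta(\tau)+\mathbf{V}^T(\tau)\mathbf{R}\Delta\mathbf{V}(\tau)+(\mathbb{E}\mathbf{V}(\tau))^T\bar{\mathbf{R}}\,\mathbb{E}\Delta\mathbf{V}(\tau)\Big]\notag\\
&+\mathbb{E}[x^T(\Gamma+1)\Phi(\Gamma+1)\eta(\Gamma+1)]+(\mathbb{E}x(\Gamma+1))^T\bar\Phi(\Gamma+1)\mathbb{E}\eta(\Gamma+1).\notag
\end{align}
It remains to recast $\Sigma$ in the bracketed form of \eqref{equ:8}.

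The core step is a discrete summation‑by‑parts identity linking $\Sigma$ to the costate $\theta$ of \eqref{equ:11}. I would consider the telescoping sum $\sum_{\tau=0}^{\Gamma}\big(\mathbb{E}[\theta^T(\tau)\eta(\tau+1)]-\mathbb{E}[\theta^T(\tau-1)\eta(\tau)]\big)$, which collapses to $\mathbb{E}[\theta^T(\Gamma)\eta(\Gamma+1)]$ since $\eta(0)=0$, and which by the terminal condition $\theta(\Gamma)=\Phi(\Gamma+1)x(\Gamma+1)+\bar\Phi(\Gamma+1)\mathbb{E}x(\Gamma+1)$ equals exactly the pair of terminal terms of $\Sigma$. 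On the other hand I would insert \eqref{equ:10} into $\mathbb{E}[\theta^T(\tau)\eta(\tau+1)]$ and \eqref{equ:11} into $\mathbb{E}[\theta^T(\tau-1)\eta(\tau)]$ and cancel the common pieces: the $\mathcal{A}(\tau)$‑terms match because $\eta(\tau)$ is $\mathcal{F}_0(\tau)$‑measurable — it is generated by $\beta,\omega(0),\dots,\omega(\tau-1)$ together with the $\Delta v_i(s)$, $s\le\tau-1$, each $\mathcal{F}_i(s)\subset\mathcal{F}_0(\tau)$‑measurable by Assumption \ref{asss} — so the tower property yields $\mathbb{E}\big[(\mathbb{E}[\mathcal{A}^T(\tau)\theta(\tau)\mid\mathcal{F}_0(\tau)])^T\eta(\tau)\big]=\mathbb{E}[\theta^T(\tau)\mathcal{A}(\tau)\eta(\tau)]$, while the $\bar{\mathcal{A}}(\tau)$‑terms match because $\mathbb{E}[\bar{\mathcal{A}}^T(\tau)\theta(\tau)]$ and $\mathbb{E}\eta(\tau)$ are deterministic. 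Summing over $\tau$, what survives is
\begin{align}
\sum_{\tau=0}^{\Gamma}\Big(\mathbb{E}[x^T(\tau)Q\eta(\tau)]+(\mathbb{E}x(\tau))^T\bar Q\,\mathbb{E}\eta(\tau)\Big)+\mathbb{E}[\theta^T(\Gamma)\eta(\Gamma+1)]=\sum_{\tau=0}^{\Gamma}\mathbb{E}\Big[\theta^T(\tau)\big(\mathbf{B}(\tau)\Delta\mathbf{V}(\tau)+\bar{\mathbf{B}}(\tau)\mathbb{E}\Delta\mathbf{V}(\tau)\big)\Big].\notag
\end{align}

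The last step is bookkeeping. Substituting this identity into $\Sigma$ replaces its $\eta$‑dependent part by $\theta$‑terms; I would then move the deterministic factors $\mathbb{E}\Delta\mathbf{V}(\tau)$ and $\mathbb{E}\mathbf{V}(\tau)$ in and out of the expectation, using $\mathbb{E}[\theta^T(\tau)\bar{\mathbf{B}}(\tau)\mathbb{E}\Delta\mathbf{V}(\tau)]=\mathbb{E}\big[(\mathbb{E}[\bar{\mathbf{B}}^T(\tau)\theta(\tau)])^T\Delta\mathbf{V}(\tau)\big]$ and $(\mathbb{E}\mathbf{V}(\tau))^T\bar{\mathbf{R}}\,\mathbb{E}\Delta\mathbf{V}(\tau)=\mathbb{E}\big[(\bar{\mathbf{R}}\,\mathbb{E}\mathbf{V}(\tau))^T\Delta\mathbf{V}(\tau)\big]$, together with the symmetry of $\mathbf{R}$; collecting the coefficients of $\Delta\mathbf{V}(\tau)$ produces exactly $\mathbf{B}^T(\tau)\theta(\tau)+\mathbb{E}[\bar{\mathbf{B}}^T(\tau)\theta(\tau)]+\mathbf{R}\mathbf{V}(\tau)+\bar{\mathbf{R}}\,\mathbb{E}\mathbf{V}(\tau)$, so that $2\varepsilon\Sigma$ assumes the form asserted in \eqref{equ:8}. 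I expect the main obstacle to be keeping the three layers of averaging straight in the core step — the outer expectation defining $J_\Gamma$, the conditional expectation $\mathbb{E}[\cdot\mid\mathcal{F}_0(\tau)]$ in the costate recursion \eqref{equ:11}, and the mean‑field expectations $\mathbb{E}x,\mathbb{E}v_i$ — and, in particular, establishing the $\mathcal{F}_0(\tau)$‑adaptedness of $\eta(\tau)$ that makes the $\mathcal{A}(\tau)$‑contributions cancel in the telescoping; once that identity is in place, the $\varepsilon$‑expansion and the regrouping into block‑vector form are routine.
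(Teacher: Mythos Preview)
Your proposal is correct and follows precisely the standard variational route that the paper has in mind: the authors omit the argument and point to Lemma~1 of \cite{qxz2020}, whose proof is exactly the linearity-plus-telescoping computation you outline (exact perturbation $x^\varepsilon=x+\varepsilon\eta$, quadratic expansion of $J_\Gamma$, and a discrete summation-by-parts against the costate $\theta$ using the $\mathcal{F}_0(\tau)$-measurability of $\eta(\tau)$). There is nothing to add; your handling of the three layers of expectation and of the mean-field terms is the only place where the present setting differs from \cite{qxz2020}, and you treat it correctly.
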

\begin{proof}

\color{blue} {In view of space limitations, the detailed proof is omitted here, which can be deduced from \emph {Lemma 1 of \citep{qxz2020}. }} 
\end{proof}

Based on the results of Lemma \ref{lemma:1}, we can present the following lemma on the solvability conditions of Problem \ref{problem:1}.
\begin{lemma}\label{lemma:2}
Under Assumptions \ref{asss}-\ref{assumption:1}, Problem \ref{problem:1} can be uniquely solved if and only if the following equilibrium condition can be uniquely solvable for $0\leq i\leq h,i\leq \tau\leq \Gamma$,
\begin{align}\label{equ:15}
0=\mathcal{R}_{i}v_{i}(\tau)+\bar{\mathcal{R}}_{i}\mathbb{E}v_{i}(\tau)+\mathbb{E}[\mathcal{B}_{i}^{T}(\tau)\theta(\tau)|\mathcal{F}_{i}(\tau)]+\mathbb{E}[\bar{\mathcal{B}}_{i}^{T}(\tau)\theta(\tau)],
\end{align}
in which the costate $\theta(\tau)$ satisfies \eqref{equ:11}.\end{lemma}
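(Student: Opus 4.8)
The plan is to derive Lemma~\ref{lemma:2} directly from the variational identity~\eqref{equ:8} established in Lemma~\ref{lemma:1}. The starting point is the observation that, since $v_i(\tau)$ is required to be $\mathcal{F}_i(\tau)$-measurable and $\Delta v_i(\tau)$ is an arbitrary $\mathcal{F}_i(\tau)$-adapted perturbation, $v$ is optimal for Problem~\ref{problem:1} if and only if the linear-in-$\varepsilon$ term in~\eqref{equ:8} vanishes for every admissible $\Delta\mathbf{V}(\tau)$. Writing out that term componentwise, the requirement becomes
\begin{align}\label{equ:varcond}
\sum_{\tau=0}^{\Gamma}\sum_{i=0}^{h}\mathbb{E}\Big[\big(\mathbb{E}[\mathcal{B}_i^{T}(\tau)\theta(\tau)|\mathcal{F}_i(\tau)]+\mathbb{E}[\bar{\mathcal{B}}_i^{T}(\tau)\theta(\tau)]+\mathcal{R}_i v_i(\tau)+\bar{\mathcal{R}}_i\mathbb{E}v_i(\tau)\big)^{T}\Delta v_i(\tau)\Big]=0,
\end{align}
where I have used the tower property to replace $\mathbf{B}^T(\tau)\theta(\tau)$ by its $\mathcal{F}_i(\tau)$-conditional expectation inside the inner product with the $\mathcal{F}_i(\tau)$-measurable $\Delta v_i(\tau)$, and noted that $\mathbb{E}[\mathbf{\bar B}^T(\tau)\theta(\tau)]$ and $\bar{\mathcal{R}}_i\mathbb{E}v_i(\tau)$ are already deterministic. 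First I would argue that~\eqref{equ:varcond} holding for all $\Delta v_i$ forces each summand's bracketed coefficient to vanish: fix a time $\tau$ and an index $i$, choose $\Delta v_j\equiv 0$ for all $(j,t)\neq(i,\tau)$, and then take $\Delta v_i(\tau)$ equal to the bracketed term itself (which is $\mathcal{F}_i(\tau)$-measurable and square-integrable under Assumption~\ref{assumption:1}); this yields $\mathbb{E}|\,\cdot\,|^2=0$, hence~\eqref{equ:15}. Conversely, if~\eqref{equ:15} holds then the linear term in~\eqref{equ:8} is identically zero.

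Having isolated the stationarity condition, the sufficiency (``if'') direction is immediate: when~\eqref{equ:15} holds, \eqref{equ:8} reduces to $J_\Gamma(v^\varepsilon)-J_\Gamma(v)=\varepsilon^2\Delta J_\Gamma(\Delta v)$, and by~\eqref{equ:9} together with Assumption~\ref{assumption:1} ($Q\ge 0$, $Q+\bar Q\ge 0$, $\mathcal{R}_i>0$, $\mathcal{R}_i+\bar{\mathcal{R}}_i>0$, $\Phi(\Gamma+1)\ge 0$, $\Phi(\Gamma+1)+\bar\Phi(\Gamma+1)\ge 0$) we have $\Delta J_\Gamma(\Delta v)\ge 0$ for every perturbation; the standard mean-field decomposition $\eta^T Q\eta+(\mathbb{E}\eta)^T\bar Q\mathbb{E}\eta=(\eta-\mathbb{E}\eta)^TQ(\eta-\mathbb{E}\eta)+(\mathbb{E}\eta)^T(Q+\bar Q)\mathbb{E}\eta$ makes each term manifestly nonnegative. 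Thus $J_\Gamma(v^\varepsilon)\ge J_\Gamma(v)$, so $v$ is optimal, and because $\mathcal{R}_i>0$ and $\mathcal{R}_i+\bar{\mathcal{R}}_i>0$ the term $\Delta\mathbf{V}^T(\tau)\mathbf{R}\Delta\mathbf{V}(\tau)+(\mathbb{E}\Delta\mathbf{V}(\tau))^T\bar{\mathbf R}\mathbb{E}\Delta\mathbf{V}(\tau)$ is strictly positive unless $\Delta\mathbf{V}\equiv 0$, giving uniqueness of the optimal control.

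For the necessity (``only if'') direction, suppose $v$ solves Problem~\ref{problem:1}. Then $\varepsilon\mapsto J_\Gamma(v^\varepsilon)=J_\Gamma(v)+2\varepsilon\,(\text{linear term})+\varepsilon^2\Delta J_\Gamma(\Delta v)$ is a scalar quadratic in $\varepsilon$ minimized at $\varepsilon=0$ for every choice of $\Delta v$; differentiating at $\varepsilon=0$ kills the linear term, which is exactly~\eqref{equ:varcond}, and the argument of the previous paragraph then yields~\eqref{equ:15}. Finally, the equivalence of \emph{unique} solvability of Problem~\ref{problem:1} with \emph{unique} solvability of the equilibrium system~\eqref{equ:15}--\eqref{equ:11}: any solution $v$ of~\eqref{equ:15} is optimal by the sufficiency direction, and if there were two distinct solutions $v,v'$ of the equilibrium system both would be optimal controls, contradicting the uniqueness of the optimal control established above; conversely a unique optimal control must satisfy~\eqref{equ:15} and no other control can. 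I expect the main obstacle to be the careful handling of the conditioning and measurability in passing from~\eqref{equ:8} to~\eqref{equ:varcond}—specifically, justifying that the $\mathcal{F}_i(\tau)$-adaptedness of $\Delta v_i(\tau)$ lets one replace $\mathbf{B}^T(\tau)\theta(\tau)$ with its conditional expectation and, more delicately, verifying that the coefficient term in~\eqref{equ:15} is itself an admissible perturbation (square-integrable and $\mathcal{F}_i(\tau)$-measurable) so that the ``choose $\Delta v_i$ equal to the coefficient'' trick is legitimate; the convexity/positivity bookkeeping in~\eqref{equ:9} is routine once Assumption~\ref{assumption:1} is invoked.
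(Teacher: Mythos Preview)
Your proposal is correct and follows essentially the same variational approach as the paper: both use the identity~\eqref{equ:8}, take $\Delta v_i(\tau)$ equal to the $\mathcal{F}_i(\tau)$-measurable coefficient $\Theta_i(\tau)$ to force~\eqref{equ:15} for necessity, and invoke $\Delta J_\Gamma(\Delta v)\ge 0$ under Assumption~\ref{assumption:1} for sufficiency. Your treatment is in fact more careful than the paper's on two points the paper leaves implicit---the tower-property justification for replacing $\mathbf{B}^T(\tau)\theta(\tau)$ by its $\mathcal{F}_i(\tau)$-conditional expectation, and the uniqueness argument via strict positivity of $\mathcal{R}_i$ and $\mathcal{R}_i+\bar{\mathcal{R}}_i$---so nothing is missing.
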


\begin{proof}
`Necessity': If Problem \ref{problem:1} can be uniquely solved and Assumptions \ref{asss}-\ref{assumption:1} hold, we will show \eqref{equ:15} should be uniquely solved.

In fact, for any $\Delta v_{i}(\tau)$ and $\varepsilon\in\mathbb{R}$, if we denote $v_{i}(\tau)$ as the optimal control strategy  for $0\leq i\leq h,i\leq \tau\leq \Gamma$, then it can be implied from \eqref{equ:8} that,
\begin{align}\label{equ:16}
J_{\Gamma}(v^\varepsilon)-J_{\Gamma}(v)=\varepsilon^{2}\Delta J_{\Gamma}(\Delta v)+2\varepsilon\sum_{\tau=0}^{\Gamma}\sum_{i=0}^{h}\mathbb{E}\Big[\big[\mathcal{B}_{i}^{T}(\tau)\theta(\Gamma)+\mathbb{E}[\bar{\mathcal{B}}_{i}^{T}(\tau)\theta(\Gamma)]+\mathcal{R}_{i}v_{i}(\tau)+\bar{\mathcal{R}}_{i}\mathbb{E}v_{i}(\tau)\big]^{T}\Delta v_{i}(\tau)\Big]\geq0,
\end{align}

Note that $\Delta J_{\Gamma}(\Delta v)\geq 0$ can be shown from Assumption \ref{assumption:1}, then we will show \eqref{equ:15} holds by contraction. In other words, it is assumed that
\begin{align}\label{equ:17}
R_{i}&v_{i}(\tau)+\bar{R}_{i}\mathbb{E}v_{i}(\tau)+\mathbb{E}[\mathcal{B}_{i}^{T}(\tau)\theta(\tau)|\mathcal{F}_{i}(\tau)]+\mathbb{E}[\bar{\mathcal{B}}_{i}^{T}(\tau)\theta(\tau)]=\Theta_{i}(\tau)\neq 0.
\end{align}
By letting $\Delta v_{i}(\tau)=\Theta_{i}(\tau)$, we have
\begin{equation}
J_{\Gamma}(v^\varepsilon)-J_{\Gamma}(v)=\varepsilon^{2}\Delta J_{\Gamma}(\Delta v)+2\varepsilon \sum_{\tau=0}^{\Gamma}\sum_{i=0}^{h}\Theta_{i}^{T}(\tau)\Theta_{i}(\tau)\notag\\.
\end{equation}

Obviously, we can always find some $\varepsilon<0$ such that $ J_{\Gamma}(v^\varepsilon)-J_{\Gamma}(v)<0,$  which contradicts with (\ref{equ:15}). The proof of Necessity is straightforward.

`Sufficiency': Suppose (\ref{equ:15}) is uniquely solvable, we shall prove the uniquely solvable of Problem \ref{problem:1}. Actually, under Assumption \ref{assumption:1}, from (\ref{equ:8}) we know that for any $\varepsilon\in\mathbb{R}$ and $\Delta v_{i}(t),$  it follows that
\begin{equation}
J_{\Gamma}(v^\varepsilon)-J_{\Gamma}(v)=\varepsilon^{2}\Delta J_{\Gamma}(\Delta v)\geq0,\notag
\end{equation}
it is evident to see that Problem \ref{problem:1} is uniquely solvable.
\end{proof}

\begin{remark}\label{remark3}
Combining Lemma \ref{lemma:1} and Lemma \ref{lemma:2}, the necessary and sufficient
unique solvability conditions of Problem \ref{problem:1} are derived. Subsequently, in order to obtain the optimal decentralized control strategy, we must pay our attention on solving the following MF-FBSDEs,
\begin{align}\label{mffbsde}
\left\{
\begin{aligned}
x(\tau+1)=&\Big[\mathcal{A}x(\tau)+\bar{\mathcal{A}}\mathbb{E}x(\tau)+\sum_{i=0}^{h}[\mathcal{B}_{i}v_{i}(\tau)+\bar{\mathcal{B}}_{i}\mathbb{E}v_{i}(\tau)]\Big]\\
&+\Big[Cx(\tau)+\bar{C}\mathbb{E}x(\tau)+\sum_{i=0}^{h}[D_{i}v_{i}(\tau)+\bar{D}_{i}\mathbb{E}v_{i}(\tau)]\Big]\omega (\tau),\\
\theta(\tau-1)=&Qx(\tau)+\bar{Q}\mathbb{E}x(\tau)+\mathbb{E}[\mathcal{A}^{T}(\tau)\theta(\tau)|\mathcal{F}_{0}(\tau)]+\mathbb{E}[\bar{\mathcal{A}}^{T}(\tau)\theta(\tau)],\\
0=\mathcal{R}_{i}&v_{i}(\tau)+\bar{\mathcal{R}}_{i}\mathbb{E}v_{i}(\tau)+\mathbb{E}[\mathcal{B}_{i}^{T}(\tau)\theta(\tau)|\mathcal{F}_{i}(\tau)]+\mathbb{E}[\bar{\mathcal{B}}_{i}(\tau)^{T}\theta(\tau)],0\leq i\leq h,i\leq \tau\leq \Gamma,\\
\theta(\Gamma)=&\Phi(\Gamma+1)x(\Gamma+1)+\bar{\Phi}(\Gamma+1)\mathbb{E}x(\Gamma+1),
\end{aligned}
\right.
\end{align}
which consists of system (\ref{equ:2}), costate equation (\ref{equ:11}) and equilibrium condition (\ref{equ:15}), where (\ref{equ:2}) is forward and (\ref{equ:11}) is backward.
\end{remark}

\section{Optimal Decentralized Control}
In this section, we will explore the optimal decentralized control strategy by solving the MF-FBSDEs \eqref{mffbsde}. For this reason, an innovative orthogonal decomposition approach will be adopted.

\subsection{Orthogonal Decomposition Approach}

By applying the orthogonal decomposition approach, the following lemma can be presented.
\begin{lemma}\label{lemma:4}
System (\ref{equ:2}) and cost functional (\ref{equ:4}) can be rewritten as, respectively
\begin{align}\label{equ:19}
x(\tau&+1)=\mathcal{A}(\tau)x(\tau)+\bar{\mathcal{A}}(\tau)\mathbb{E}x(\tau)+\mathbf{B}_{i}(\tau)\mathbf{V}_{i}(\tau)+\mathbf{\bar{B}}_{i}(\tau)\mathbb{E}\mathbf{V}_{i}(\tau)\notag\\
&+\sum_{j=0}^{i-1}[\mathcal{B}_{j}(\tau)\tilde{v}_{j,i}(\tau)+\bar{\mathcal{B}}_{j}(\tau)\mathbb{E}\tilde{v}_{j,i}(\tau)]+\sum_{j=i+1}^{h}[\mathcal{B}_{j}(\tau)v_{j}(\tau)+\bar{\mathcal{B}}_{j}(\tau)\mathbb{E}v_{j}(\tau)],
\end{align}
and
\begin{align}\label{equ:20} J_{\Gamma}(v)&=\sum_{\tau=0}^{\Gamma}\mathbb{E}\Big[x^{T}(\tau)Qx(\tau)+[\mathbb{E}x(\tau)]^{T}\bar{Q}\mathbb{E}x(\tau)+\mathbf{V}_{i}^{T}(\tau)\mathbf{R}_{i}\mathbf{V}_{i}(\tau)+[\mathbb{E}\mathbf{V}_{i}(\tau)]^{T}\mathbf{\bar{R}}_{i}\mathbb{E}\mathbf{V}_{i}(\tau)\notag\\
&+\sum_{j=0}^{i-1}\big[\tilde{v}_{j,i}^{T}(\tau)\mathcal{R}_{j}\tilde{v}_{j,i}(\tau)+[\mathbb{E}\tilde{v}_{j,i}(\tau)]^{T}\bar{\mathcal{R}}_{j}\mathbb{E}\tilde{v}_{j,i}(\tau)\big]+\sum_{j=i+1}^{h}\big[v_{j}^{T}(\tau)\mathcal{R}_{j}v_{j}(\tau)+[\mathbb{E}v_{j}(\tau)]^{T}\bar{\mathcal{R}}_{j}\mathbb{E}v_{j}(\tau)\big]\Big]\notag\\
&+\mathbb{E}[x^T(\Gamma+1)\Phi(\Gamma+1)x(\Gamma+1)]+[\mathbb{E}x(\Gamma+1)]^T\bar{\Phi}(\Gamma+1)\mathbb{E}x(\Gamma+1),
\end{align}
where for $0\leq j\leq i-1,$ there holds
\begin{align}\label{equ:21}
\hat{v}_{j,i}(\tau)=&\mathbb{E}[v_{j}(\tau)\mid \mathcal{F}_{i}(\tau)],\notag\\
\tilde{v}_{j,i}(\tau)=&v_{j}(\tau)-\hat{v}_{j,i}(\tau).
\end{align}
Moreover, (\ref{equ:11}) and (\ref{equ:15}) can be rewritten as follows, respectively
\begin{align}\label{equ:22}
0=&\mathbf{R}_{i}\mathbf{V}_{i}(\tau)+\mathbf{\bar{R}}_{i}\mathbb{E}\mathbf{V}_{i}(\tau)+\mathbb{E}[\mathbf{B}^{T}(\tau)\theta(\tau)\mid \mathcal{F}_{i}(\tau)]+\mathbb{E}[\mathbf{\bar{B}}^{T}(\tau)\theta(\tau)],
\end{align}
\begin{align}\label{equ:23}
0=&\mathcal{R}_{j}\tilde{v}_{j,i}(\tau)+\bar{\mathcal{R}}_{j}\mathbb{E}\tilde{v}_{j,i}(\tau)-\mathbb{E}[\mathcal{B}_{j}^{T}(\tau)\theta(\tau)\mid \mathcal{F}_{i}(\tau)]+\mathbb{E}[\mathcal{B}_{j}^{T}(\tau)\theta(\tau)|\mathcal{F}_{j}(\tau)],0\leq j\leq i-1,
\end{align}
\begin{align}\label{equ:24}
0=&\mathcal{R}_{j}v_{j}(\tau)+\bar{\mathcal{R}}_{j}\mathbb{E}v_{j}(\tau)+\mathbb{E}[\mathcal{B}_{j}^{T}(\tau)\theta(\tau)|\mathcal{F}_{j}(\tau)]+\mathbb{E}[\bar{\mathcal{B}}_{j}^{T}(\tau)\theta(\tau)],i\leq j\leq h,
\end{align}
where $0\leq i\leq h,$ and $i\leq \tau\leq \Gamma.$
\end{lemma}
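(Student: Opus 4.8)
The plan is to verify that the rewritten system dynamics \eqref{equ:19}, the rewritten cost \eqref{equ:20}, and the rewritten optimality system \eqref{equ:22}--\eqref{equ:24} are all algebraic consequences of the original objects \eqref{equ:2}, \eqref{equ:4}, \eqref{equ:11}, \eqref{equ:15} once we perform the orthogonal decomposition \eqref{equ:21} of each ``faster'' control $v_j(\tau)$, $0\le j\le i-1$, with respect to the coarser filtration $\mathcal{F}_i(\tau)$. The key structural fact is that, by Assumption \ref{asss} and the filtration nesting $\mathcal{F}_0(\tau)\supset\cdots\supset\mathcal{F}_h(\tau)$, for $j\le i-1$ the control $v_j(\tau)$ is $\mathcal{F}_j(\tau)$-measurable but \emph{not} $\mathcal{F}_i(\tau)$-measurable, so the splitting $v_j(\tau)=\hat v_{j,i}(\tau)+\tilde v_{j,i}(\tau)$ is an honest orthogonal decomposition in $L^2$: $\hat v_{j,i}(\tau)=\mathbb{E}[v_j(\tau)\mid\mathcal{F}_i(\tau)]$ is $\mathcal{F}_i(\tau)$-measurable and $\tilde v_{j,i}(\tau)$ satisfies $\mathbb{E}[\tilde v_{j,i}(\tau)\mid\mathcal{F}_i(\tau)]=0$. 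For $j\ge i$ no decomposition is needed since $v_j(\tau)$ is already $\mathcal{F}_j(\tau)\subset\mathcal{F}_i(\tau)$-measurable.

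First I would derive \eqref{equ:19}: starting from \eqref{equ:2}, split the sum $\sum_{i=0}^{h}$ there into the block $j=0,\dots,i$ (absorbed into the stacked notation $\mathbf{B}_i(\tau)\mathbf{V}_i(\tau)+\bar{\mathbf B}_i(\tau)\mathbb{E}\mathbf{V}_i(\tau)$ using the definitions of $\mathbf{V}_i(\tau)$ in \eqref{equ:1}, where the first $i$ entries are the $\hat v_{j,i}(\tau)$ and the last is $v_i(\tau)$), plus the block $j=0,\dots,i-1$ where each $v_j$ is replaced by $\hat v_{j,i}+\tilde v_{j,i}$, plus the block $j=i+1,\dots,h$ left untouched. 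The $\hat v_{j,i}$ pieces from the second block combine with the stacked term; the residual $\tilde v_{j,i}$ pieces give the $\sum_{j=0}^{i-1}$ term; note $\mathbb{E}\tilde v_{j,i}(\tau)=0$, so that term's expectation contribution is consistent (the $\bar{\mathcal B}_j(\tau)\mathbb{E}\tilde v_{j,i}(\tau)$ is written for symmetry but vanishes). Next, for the cost \eqref{equ:20}: expand $v_j^T\mathcal{R}_j v_j=(\hat v_{j,i}+\tilde v_{j,i})^T\mathcal{R}_j(\hat v_{j,i}+\tilde v_{j,i})$, take $\mathbb{E}$, and use the orthogonality $\mathbb{E}[\hat v_{j,i}^T\mathcal{R}_j\tilde v_{j,i}]=\mathbb{E}[\hat v_{j,i}^T\mathcal{R}_j\mathbb{E}[\tilde v_{j,i}\mid\mathcal{F}_i(\tau)]]=0$ to kill the cross terms; the $\mathbb{E}v_j$ terms are unchanged since $\mathbb{E}v_j=\mathbb{E}\hat v_{j,i}$. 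This reproduces \eqref{equ:20} after folding $j\le i$ into $\mathbf{V}_i^T\mathbf{R}_i\mathbf{V}_i$.

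For the optimality conditions, \eqref{equ:22} is just the stacked form of the original \eqref{equ:15} written simultaneously for $j=0,\dots,i$, applying $\mathbb{E}[\,\cdot\mid\mathcal{F}_i(\tau)]$ to each; this uses $\mathbb{E}[\mathbb{E}[\mathcal{B}_j^T(\tau)\theta(\tau)\mid\mathcal{F}_j(\tau)]\mid\mathcal{F}_i(\tau)]=\mathbb{E}[\mathcal{B}_j^T(\tau)\theta(\tau)\mid\mathcal{F}_i(\tau)]$ for $j\le i$ by the tower property together with $\mathcal{F}_i(\tau)\subset\mathcal{F}_j(\tau)$, and likewise it converts $\mathcal{R}_j v_j$ into $\mathcal{R}_j\hat v_{j,i}$ in each block. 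Equation \eqref{equ:24} for $i\le j\le h$ is literally \eqref{equ:15} for those indices, unchanged. The only genuinely new relation is \eqref{equ:23}: subtract from \eqref{equ:15} (for index $j\le i-1$) its $\mathcal{F}_i(\tau)$-conditional expectation, i.e. subtract the $j$-th block of \eqref{equ:22}; the $\bar{\mathcal R}_j\mathbb{E}v_j$ and $\mathbb{E}[\bar{\mathcal B}_j^T(\tau)\theta(\tau)]$ terms are deterministic hence cancel, $\mathcal{R}_j v_j-\mathbb{E}[\mathcal{R}_j v_j\mid\mathcal{F}_i(\tau)]=\mathcal{R}_j\tilde v_{j,i}$, and $\mathbb{E}[\mathcal{B}_j^T(\tau)\theta(\tau)\mid\mathcal{F}_j(\tau)]$ survives while $-\mathbb{E}[\mathcal{B}_j^T(\tau)\theta(\tau)\mid\mathcal{F}_i(\tau)]$ appears from the subtraction — matching \eqref{equ:23} exactly (with the vanishing $\bar{\mathcal R}_j\mathbb{E}\tilde v_{j,i}$ kept for symmetry).

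The main obstacle is purely bookkeeping rather than conceptual: one must track carefully how the index blocks $\{0,\dots,i-1\}$, $\{i\}$, $\{i+1,\dots,h\}$ interact with the stacked matrices $\mathbf{B}_i,\mathbf{R}_i,\mathbf{V}_i$ defined in the Notations, and be scrupulous about \emph{which} filtration each conditional expectation is taken with respect to — mixing up $\mathcal{F}_i(\tau)$ and $\mathcal{F}_j(\tau)$ is the easy way to get the signs in \eqref{equ:23} wrong. I would present the argument in the order: (i) fix $0\le i\le h$ and $i\le\tau\le\Gamma$; (ii) establish the $L^2$-orthogonality of the splitting \eqref{equ:21}; (iii) substitute into \eqref{equ:2} and \eqref{equ:4} to get \eqref{equ:19}--\eqref{equ:20}; (iv) condition \eqref{equ:15} on $\mathcal{F}_i(\tau)$ and stack to get \eqref{equ:22}, keep it verbatim for $j\ge i$ to get \eqref{equ:24}, and take the difference to get \eqref{equ:23}; (v) note that \eqref{equ:11} is unaffected and carries over unchanged. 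Since all steps are elementary linear algebra and the tower property of conditional expectation, the full verification is routine and, as the authors do elsewhere, the bulk of it can be relegated to the Appendix.
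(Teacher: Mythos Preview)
Your proposal is correct and follows essentially the same route as the paper: both arguments rest on the conditional-expectation splitting \eqref{equ:21}, use the tower property (with $\mathcal{F}_i(\tau)\subset\mathcal{F}_j(\tau)$ for $j\le i$) to obtain \eqref{equ:22} by conditioning and stacking and \eqref{equ:23} by subtraction, and invoke the $L^2$-orthogonality $\mathbb{E}[\hat v_{j,i}^T\mathcal{R}_j\tilde v_{j,i}]=0$ to decompose the cost into \eqref{equ:20}. The only cosmetic difference is order of exposition --- the paper derives the optimality relations first and the system/cost rewriting second, whereas you do the reverse --- and the paper records the slightly more general orthogonality $\mathbb{E}[\hat v_{n,i}^T T_{n,j}\tilde v_{j,i}]=0$ for all $0\le n,j\le i-1$, which you do not need since $\mathbf{R}_i$ is block diagonal.
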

\begin{proof}
By plugging (\ref{equ:15}) into (\ref{equ:1}) and (\ref{equ:21}), we get:
\begin{align}
0=&\mathcal{R}_{j}\mathbb{E}[v_{j}(\tau)\mid \mathcal{F}_{i}(\tau)]+\bar{\mathcal{R}}_{j}\mathbb{E}[\mathbb{E}v_{j}(\tau)\mid \mathcal{F}_{i}(\tau)]\notag\\
&+\mathbb{E}\Big[\mathbb{E}[\mathcal{B}_{j}^{T}(\tau)\theta(\tau)\mid \mathcal{F}_{j}(\tau)]\mid \mathcal{F}_{i}(\tau)\Big]+\mathbb{E}\Big[\mathbb{E}[\bar{\mathcal{B}}_{j}^{T}(\tau)\theta(\tau)]\mid \mathcal{F}_{i}(\tau)\Big].\notag
\end{align}
Then, for $0\leq j\leq i-1,$
\begin{align}
0=&\mathcal{R}_{j}\hat{v}_{j,i}(\tau)+\bar{\mathcal{R}}_{j}\mathbb{E}\hat{v}_{j,i}(\tau)+\mathbb{E}[\mathcal{B}_{j}^{T}(\tau)\theta(\tau)\mid \mathcal{F}_{i}(\tau)] +\mathbb{E}[\bar{\mathcal{B}}_{j}^{T}(\tau)\theta(\tau)],\notag
\end{align}
for simplicity of calculation, we have
\begin{align}\label{equ:25}
\mathcal{R}_{j}\hat{v}_{j,i}(\tau)&+\bar{\mathcal{R}}_{j}\mathbb{E}\hat{v}_{j,i}(\tau)=-\mathbb{E}[\mathcal{B}_{j}^{T}(\tau)\theta(\tau)\mid \mathcal{F}_{i}(\tau)] -\mathbb{E}[\bar{\mathcal{B}}_{j}^{T}(\tau)\theta(\tau)].
\end{align}
What's more, for $i\leq j\leq h,$ obviously (\ref{equ:23}) is established.

We know that (\ref{equ:23}) can be obtained by combining (\ref{equ:15}) and (\ref{equ:25}). Next, using (\ref{equ:1}) and (\ref{equ:25}), (\ref{equ:22}) can be verified.
Before proving (\ref{equ:19}) and (\ref{equ:20}), it is worth noting that the following two relationships are established:

1) For $0\leq j\leq i-1,$ we infer that
\begin{align}
\mathbb{E}[v_{i}^{T}(\tau)T_{i,j}\tilde{v}_{j,i}(\tau)]=&\mathbb{E}\Big[\mathbb{E}[v_{i}^{T}(\tau)T_{i,j}\tilde{v}_{j,i}(\tau)\mid\mathcal{F}_{i}(\tau)]\Big]=\mathbb{E}\Big[v_{i}^{T}(\tau)T_{i,j}\mathbb{E}[v_{j}(\tau)-\hat{v}_{j,i}(\tau)\mid\mathcal{F}_{i}(\tau)]\Big]\notag\\
=&\mathbb{E}\bigg[v_{i}^{T}(\tau)T_{i,j}\mathbb{E}\Big[v_{j}(\tau)-\mathbb{E}[v_{j}(\tau)\mid\mathcal{F}_{i}(\tau)]\mid\mathcal{F}_{i}(\tau)\Big]\bigg]=0.\notag
\end{align}

2) For $0\leq j,n\leq i-1,$ there holds that
\begin{align}
\mathbb{E}&[\hat{v}_{n,i}^{T}(\tau)T_{n,j}\tilde{v}_{j,i}(\tau)]=\mathbb{E}\Big[\mathbb{E}[\hat{v}_{n,i}^{T}(\tau)T_{n,j}\tilde{v}_{j,i}(\tau)\mid\mathcal{F}_{i}(\tau)]\Big]\notag\\
=&\mathbb{E}\Big[\hat{v}_{n,i}^{T}(\tau)T_{n,j}\mathbb{E}[\tilde{v}_{j,i}(\tau)\mid\mathcal{F}_{i}(\tau)]\Big]=0.\notag
\end{align}

In summary, we have completed the proof of the orthogonality of $\mathbf{V}_{i}(\tau)$ and $\tilde{v}_{0,i}(\tau),\cdots,\tilde{v}_{i-1,i}(\tau)$
for $0\leq i\leq h,$ and then we can immediately verify that (\ref{equ:19}) and (\ref{equ:20}) hold.
\end{proof}

\begin{remark}\label{remark4}
The decoupling of the MF-FBSDEs \eqref{mffbsde} is difficult in view of the non-classic information structure of Problem \ref{problem:1}. Therefore, the orthogonal decomposition approach is introduced in Lemma \ref{lemma:4}, which will play a critical role in decoupling \eqref{mffbsde}. In what follows, we will show the method to derive the optimal decentralized control strategy by the use of the orthogonal decomposition approach to decouple MF-FBSDEs \eqref{mffbsde}.
\end{remark}

In view of the delayed information pattern for the $h+1$ controllers, it is necessary to derive the associated mathematical conditional expectation, i.e., the optimal predictor.

\begin{lemma}\label{lemma:5}
The optimal predictor $\hat{x}_{\tau/\tau-i}=\mathbb{E}[x(\tau)\mid \mathcal{F}_{i}(\tau)]=\mathbb{E}[x(\tau)\mid \mathcal{F}_{0}(\tau-i)]$ can be given as follows:
\begin{align}\label{equ:26}
\left\{
\begin{aligned}
\hat{x}_{\tau/\tau-i}&=\mathcal{A}^{\tau}x(0)+[(\mathcal{A}+\bar{\mathcal{A}})^{\tau}-\mathcal{A}^{\tau}]\mathbb{E}x(0)+\sum_{j=1}^{\tau}\begin{bmatrix}\mathcal{A}^{j-1}&(\mathcal{A}+\bar{\mathcal{A}})^{j-1}-\mathcal{A}^{j-1}\end{bmatrix}\big\{\begin{bmatrix}
\mathbf{B}_{i-j} & \mathbf{\bar{B}}_{i-j}\\
0 & \mathbf{B}_{i-j}+\mathbf{\bar{B}}_{i-j}
\end{bmatrix}\\
&\times\begin{bmatrix}
\mathbf{V}_{i-j}(\tau-j)\\
\mathbb{E}\mathbf{V}_{i-j}(\tau-j)
\end{bmatrix}+\sum_{m=i-j+1}^{h}\begin{bmatrix}
\mathcal{B}_{m} & \bar{\mathcal{B}}_{m}\\
0 & \mathcal{B}_{m}+\bar{\mathcal{B}}_{m}
\end{bmatrix}\begin{bmatrix}
v_{m}(\tau-j)\\
\mathbb{E}v_{m}(\tau-j)
\end{bmatrix}\big\},i=1,\cdots,h,\\
\hat{x}_{\tau+1/\tau-i}&=\mathcal{A}\hat{x}_{\tau/\tau-i}+\bar{\mathcal{A}}\mathbb{E}x(\tau)+\mathbf{B}_{i}\mathbf{V}_{i}(\tau)+\mathbf{\bar{B}}_{i}\mathbb{E}\mathbf{V}_{i}(\tau)+\sum_{m=i+1}^{h}[\mathcal{B}_{m}v_{m}(\tau)+\bar{\mathcal{B}}_{m}\mathbb{E}v_{m}(\tau)],i=0,\cdots,h-1,\\
\hat{x}_{\tau+1/\tau-h}&=\mathcal{A}\hat{x}_{\tau/\tau-h}+\bar{\mathcal{A}}\mathbb{E}x(\tau)+\mathbf{B}_{h}\mathbf{V}_{h}(\tau)+\mathbf{\bar{B}}_{h}\mathbb{E}\mathbf{V}_{h}(\tau).
\end{aligned}
\right.
\end{align}

\end{lemma}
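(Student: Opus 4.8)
The plan is to establish the three recursions in \eqref{equ:26} by computing conditional expectations of the dynamics \eqref{equ:2} with respect to the delayed filtrations, proceeding from the one-step predictors down to the $i$-step predictors. First I would verify the identity $\mathcal{F}_{i}(\tau)=\mathcal{F}_{0}(\tau-i)$, which is immediate from Assumption \ref{asss} since $\sigma\{\beta,\omega(0),\cdots,\omega(\tau-i-1)\}$ depends only on the combination $\tau-i$; this justifies writing $\hat{x}_{\tau/\tau-i}=\mathbb{E}[x(\tau)\mid\mathcal{F}_{0}(\tau-i)]$. The key structural fact I would exploit is that, by the nested information property $\mathcal{F}_{0}(\tau)\supset\cdots\supset\mathcal{F}_{h}(\tau)$, the controller $v_{m}(\tau)$ is $\mathcal{F}_{m}(\tau)$-measurable, hence $\mathcal{F}_{i}(\tau+1)=\mathcal{F}_{i-1}(\tau)$-measurable whenever $m\leq i-1$, while for $m\geq i$ the relevant conditional expectation $\mathbb{E}[v_{m}(\tau)\mid\mathcal{F}_{i}(\tau)]$ must be retained as one of the components $\hat v_{m,i}(\tau)$ collected in $\mathbf{V}_{i}(\tau)$.

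For the middle recursion ($i=0,\cdots,h-1$), I would take $\mathbb{E}[\cdot\mid\mathcal{F}_{i}(\tau)]$ on both sides of \eqref{equ:2} written at time $\tau$. Since $\mathcal{F}_{i}(\tau)$ does not contain $\omega(\tau)$, the noise term vanishes in expectation (using $\mathbb{E}[\omega(\tau)]=0$ and independence); the term $\mathcal{A}x(\tau)$ yields $\mathcal{A}\hat x_{\tau/\tau-i}$; the mean-field term $\bar{\mathcal{A}}\mathbb{E}x(\tau)$ is deterministic and stays; the controls $v_{0}(\tau),\cdots,v_{i}(\tau)$ are already $\mathcal{F}_{i}(\tau)$-measurable (since $v_{j}(\tau)$ is $\mathcal{F}_{j}(\tau)\supset\mathcal{F}_{i}(\tau)$-measurable for $j\le i$ — wait, the inclusion goes the other way, so $v_{j}(\tau)$ with $j\le i$ is $\mathcal{F}_{j}(\tau)$-measurable and $\mathcal{F}_{j}(\tau)\supset\mathcal{F}_{i}(\tau)$, meaning $v_{j}(\tau)$ need not be $\mathcal{F}_{i}(\tau)$-measurable); here I would instead note that in the rewritten dynamics \eqref{equ:19} with index $i$, the block $\mathbf{V}_{i}(\tau)=(\hat v_{0,i}(\tau),\cdots,\hat v_{i-1,i}(\tau),v_{i}(\tau))^{T}$ is exactly the $\mathcal{F}_{i}(\tau)$-measurable part, the $\tilde v_{j,i}(\tau)$ are orthogonal to it with zero conditional mean, and the tail controls $v_{m}(\tau)$, $m\ge i+1$, contribute $\mathbb{E}[v_{m}(\tau)\mid\mathcal{F}_{i}(\tau)]$. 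Taking $\mathbb{E}[\cdot\mid\mathcal{F}_{i}(\tau)]$ of \eqref{equ:19} then gives the stated middle recursion, and the bottom recursion ($i=h$) is the same computation with the tail sum empty. This is the cleaner route than working directly from \eqref{equ:2}, so I would phrase the argument via Lemma \ref{lemma:4}.

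For the explicit closed-form expression (the $i=1,\cdots,h$ case), I would iterate the one-step relation. Starting from $\hat x_{\tau/\tau-i}=\mathbb{E}[x(\tau)\mid\mathcal{F}_{0}(\tau-i)]$, I would peel off one time step at a time: $\mathbb{E}[x(\tau)\mid\mathcal{F}_{0}(\tau-i)]$ in terms of $\mathbb{E}[x(\tau-1)\mid\mathcal{F}_{0}(\tau-i)]=\hat x_{\tau-1/\tau-i}$, noting that at step $\tau-j$ the conditioning $\mathcal{F}_{0}(\tau-i)=\mathcal{F}_{i-j}(\tau-j)$, so the controls up to index $i-j$ appear with their genuine values (packaged in $\mathbf{V}_{i-j}(\tau-j)$) and those with index $m>i-j$ appear as $\mathbb{E}[v_{m}(\tau-j)\mid\mathcal{F}_{0}(\tau-i)]$. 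Here I would simultaneously track the pair $(\hat x_{\cdot/\tau-i},\,\mathbb{E}x(\cdot))$, since the mean-field term forces the deterministic mean to enter; writing the joint recursion in the $2n$-dimensional vector $\begin{bmatrix}\hat x\\\mathbb{E}x\end{bmatrix}$ with block transition $\begin{bmatrix}\mathcal{A}&\bar{\mathcal{A}}\\0&\mathcal{A}+\bar{\mathcal{A}}\end{bmatrix}$ (using \eqref{equ:3} for the second component), whose $(j-1)$-th power has first block-row $\begin{bmatrix}\mathcal{A}^{j-1}&(\mathcal{A}+\bar{\mathcal{A}})^{j-1}-\mathcal{A}^{j-1}\end{bmatrix}$, produces exactly the summation in \eqref{equ:26}. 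Summing the controlled inputs over $j=1,\cdots,\tau$ and incorporating the initial term $\hat x_{0/\cdot}=x(0)$ with its mean $\mathbb{E}x(0)$ closes the computation.

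The main obstacle I anticipate is bookkeeping the shifting information sets correctly: the subtle point is that conditioning $x(\tau)$ on $\mathcal{F}_{0}(\tau-i)$ is the same as conditioning $x(\tau-j)$ on $\mathcal{F}_{i-j}(\tau-j)$, so as one iterates backward the "observable" block of controllers grows, and one must be careful that the partition of each $v_{m}(\tau-j)$ into its $\mathcal{F}_{i-j}(\tau-j)$-measurable projection plus an orthogonal remainder (with vanishing conditional mean) is consistent across all the peeled steps. Once this indexing is pinned down, everything else — vanishing of the noise term, linearity of conditional expectation, the block-matrix power identity — is routine. I would also remark that the well-definedness of the recursion relies on the arbitrarily-prescribed initial controls $v_{i}(\tau)$, $0\le\tau\le i-1$, so that $\mathbf{V}_{i-j}(\tau-j)$ is meaningful when $\tau-j<i-j$.
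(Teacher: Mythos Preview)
Your proposal is correct and follows essentially the same route as the paper's proof: both iterate the dynamics backward via the $2n$-dimensional block recursion $\begin{bmatrix}x\\ \mathbb{E}x\end{bmatrix}\mapsto\begin{bmatrix}\mathcal{A}&\bar{\mathcal{A}}\\0&\mathcal{A}+\bar{\mathcal{A}}\end{bmatrix}\begin{bmatrix}x\\ \mathbb{E}x\end{bmatrix}+\cdots$, take $\mathbb{E}[\cdot\mid\mathcal{F}_{i}(\tau)]=\mathbb{E}[\cdot\mid\mathcal{F}_{i-j}(\tau-j)]$, and sort the controls at each step according to whether they are $\mathcal{F}_{i-j}(\tau-j)$-measurable (those with index $m\geq i-j+1$) or must be replaced by their projections $\hat v_{m,i-j}(\tau-j)$ (those with $m\leq i-j$). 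Your only notable variation is routing the one-step recursions through the orthogonal-decomposition form \eqref{equ:19} of Lemma~\ref{lemma:4} rather than \eqref{equ:2} directly; this is cleaner, since the $\tilde v_{j,i}$ terms vanish automatically under $\mathbb{E}[\cdot\mid\mathcal{F}_{i}(\tau)]$ and the $\mathcal{F}_{i}(\tau)$-measurable block $\mathbf{V}_{i}(\tau)$ passes through unchanged --- but it is the same computation the paper carries out. One small clarification: the tail controls $v_{m}(\tau)$ with $m\geq i+1$ are already $\mathcal{F}_{m}(\tau)\subset\mathcal{F}_{i}(\tau)$-measurable, so $\mathbb{E}[v_{m}(\tau)\mid\mathcal{F}_{i}(\tau)]=v_{m}(\tau)$, which is why they appear undecorated in the recursion.
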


\begin{proof}
For (\ref{equ:2}), it can be easily induced that
by taking the mathematical expectation of system (\ref{equ:2}):
\begin{align}\label{equ:27}
\mathbb{E}x(\tau+1)=&(\mathcal{A}+\bar{\mathcal{A}})\mathbb{E}x(\tau)+\sum_{i=0}^{h}(\mathcal{B}_{i}+\bar{\mathcal{B}}_{i})\mathbb{E}v_{i}(\tau).
\end{align}
Subsequently,

\begin{align}\label{equ:28}
x(\tau)=&\begin{bmatrix}
\mathcal{A}(\tau-1) & \bar{\mathcal{A}}(\tau-1)
\end{bmatrix}\begin{bmatrix}
x(\tau-1)\\
\mathbb{E}x(\tau-1)
\end{bmatrix}+\sum_{m=0}^{h}\begin{bmatrix}
\mathcal{B}_{m}(\tau-1) & \bar{\mathcal{B}}_{m}(\tau-1)
\end{bmatrix}\begin{bmatrix}
v_{m}(\tau-1)\\
\mathbb{E}v_{m}(\tau-1)
\end{bmatrix}\notag\\
=&\begin{bmatrix}
\mathcal{A}(\tau-1) & \bar{\mathcal{A}}(\tau-1)
\end{bmatrix}\begin{bmatrix}
\mathcal{A}(\tau-2) & \bar{\mathcal{A}}(\tau-2)\\
0 & \mathcal{A}+\bar{\mathcal{A}}
\end{bmatrix}\times \cdots \times\begin{bmatrix}
\mathcal{A}(0) & \bar{\mathcal{A}}(0)\\
0 & \mathcal{A}+\bar{\mathcal{A}}
\end{bmatrix}\begin{bmatrix}
x(0)\\
\mathbb{E}x(0)
\end{bmatrix}\notag\\
&+\sum_{j=1}^{\tau}\sum_{m=0}^{h}\begin{bmatrix}
\mathcal{A}(\tau-1) & \bar{\mathcal{A}}(\tau-1)
\end{bmatrix}\begin{bmatrix}
\mathcal{A}(\tau-2) & \bar{\mathcal{A}}(\tau-2)\\
0 & \mathcal{A}+\bar{\mathcal{A}}
\end{bmatrix}\times\cdots \times\begin{bmatrix}
\mathcal{A}(\tau-j+1) & \bar{\mathcal{A}}(\tau-j+1)\\
0 & \mathcal{A}+\bar{\mathcal{A}}
\end{bmatrix}\notag\\
&\times\begin{bmatrix}
\mathcal{B}_{m}(\tau-j) & \bar{\mathcal{B}}_{m}(\tau-j)\\
0 & \mathcal{B}_{m}+\bar{\mathcal{B}}_{m}
\end{bmatrix}\begin{bmatrix}
v_{m}(\tau-j)\\
\mathbb{E}v_{m}(\tau-j)
\end{bmatrix}\notag\\
=&\begin{bmatrix}
\mathcal{A}(\tau-1) & \bar{\mathcal{A}}(\tau-1)
\end{bmatrix}\begin{bmatrix}
\mathcal{A}(\tau-2) & \bar{\mathcal{A}}(\tau-2)\\
0 & \mathcal{A}+\bar{\mathcal{A}}
\end{bmatrix}\times\cdots \times\begin{bmatrix}
\mathcal{A}(0) & \bar{\mathcal{A}}(0)\\
0 & \mathcal{A}+\bar{\mathcal{A}}
\end{bmatrix}\begin{bmatrix}
x(0)\\
\mathbb{E}x(0)
\end{bmatrix}\notag\\
&+\sum_{j=1}^{\tau}\sum_{m=0}^{i-j}\begin{bmatrix}
\mathcal{A}(\tau-1) & \bar{\mathcal{A}}(\tau-1)
\end{bmatrix}\begin{bmatrix}
\mathcal{A}(\tau-2) & \bar{\mathcal{A}}(\tau-2)\\
0 & \mathcal{A}+\bar{\mathcal{A}}
\end{bmatrix}\times\cdots \times\begin{bmatrix}
\mathcal{A}(\tau-j+1) & \bar{\mathcal{A}}(\tau-j+1)\\
0 & \mathcal{A}+\bar{\mathcal{A}}
\end{bmatrix}\notag\\
&\times\begin{bmatrix}
\mathcal{B}_{m}(\tau-j) & \bar{\mathcal{B}}_{m}(\tau-j)\\
0 & \mathcal{B}_{m}+\bar{\mathcal{B}}_{m}
\end{bmatrix}\begin{bmatrix}
v_{m}(\tau-j)\\
\mathbb{E}v_{m}(\tau-j)
\end{bmatrix}+\sum_{j=1}^{\tau}\sum_{m=i-j+1}^{h}\begin{bmatrix}
\mathcal{A}(\tau-1) & \bar{\mathcal{A}}(\tau-1)
\end{bmatrix}\notag\\
&\times\begin{bmatrix}
\mathcal{A}(\tau-2) & \bar{\mathcal{A}}(\tau-2)\\
0 & \mathcal{A}+\bar{\mathcal{A}}
\end{bmatrix}\times\cdots \times\begin{bmatrix}
\mathcal{A}(\tau-j+1) & \bar{\mathcal{A}}(\tau-j+1)\\
0 & \mathcal{A}+\bar{\mathcal{A}}
\end{bmatrix}\begin{bmatrix}
\mathcal{B}_{m}(\tau-j) & \bar{\mathcal{B}}_{m}(\tau-j)\\
0 & \mathcal{B}_{m}+\bar{\mathcal{B}}_{m}
\end{bmatrix}\begin{bmatrix}
v_{m}(\tau-j)\\
\mathbb{E}v_{m}(\tau-j)
\end{bmatrix}.
\end{align}

By taking the conditional mathematical expectation of $x(\tau)$, the optimal predictor $\mathbb{E}[x(\tau)\mid \mathcal{F}_{i}(\tau)]$ can be derived as
\begin{align}
\hat{x}_{\tau/\tau-i}=&\mathbb{E}[x(\tau)\mid \mathcal{F}_{i}(\tau)]\notag\\
=&\mathcal{A}^{\tau}x(0)+[(\mathcal{A}+\bar{\mathcal{A}})^{\tau}-\mathcal{A}^{\tau}]\mathbb{E}x(0)+\sum_{j=1}^{\tau}\begin{bmatrix}\mathcal{A}^{j-1}&(\mathcal{A}+\bar{\mathcal{A}})^{j-1}-\mathcal{A}^{j-1}\end{bmatrix}\big\{\begin{bmatrix}
\mathbf{B}_{i-j} & \mathbf{\bar{B}}_{i-j}\\
0 & \mathbf{B}_{i-j}+\mathbf{\bar{B}}_{i-j}
\end{bmatrix}\notag\\
&\times\begin{bmatrix}
\mathbf{V}_{i-j}(\tau-j)\\
\mathbb{E}\mathbf{V}_{i-j}(\tau-j)
\end{bmatrix}+\sum_{m=i-j+1}^{h}\begin{bmatrix}
\mathcal{B}_{m} & \bar{\mathcal{B}}_{m}\\
0 & \mathcal{B}_{m}+\bar{\mathcal{B}}_{m}
\end{bmatrix}\begin{bmatrix}
v_{m}(\tau-j)\\
\mathbb{E}v_{m}(\tau-j)
\end{bmatrix}\big\},i=1,\cdots,h.\notag
\end{align}

Note that the following relationship can be holds,
\begin{align}
\mathbb{E}[x(\tau)\mid \mathcal{F}_{i}(\tau)]=&\mathbb{E}[x(\tau)\mid \mathcal{F}_{0}(\tau-i)]=\mathbb{E}[x(\tau)\mid \mathcal{F}_{i-j}(\tau-j)].\notag
\end{align}
Moreover, it is noted that $v_{m}(\tau-j)$ is $\mathcal{F}_{i-j}(\tau-j)$-measurable for $i-j+1\leq m\leq h$. Otherwise, $v_{m}(\tau-j)$ is not $\mathcal{F}_{i-j}(\tau-j)$-measurable for $m=0,\cdots,i-j$, subsequently, by using (\ref{equ:21}), we know that $\mathbb{E}[v_{m}(\tau-j)\mid\mathcal{F}_{i-j}(\tau-j)]$ can be represented by $\hat{v}_{m,i-j}(\tau-j)$ for $m=0,\cdots,i-j.$

Combining the properties of Gaussian white noise and  $v_{i}(\tau)$ is $\mathcal{F}_{i}(\tau)$-measurable, and by using (\ref{equ:27}) and (\ref{equ:28}) we can immediately verify that the relationship in (\ref{equ:26}) holds.
\end{proof}
\subsection{Decoupling MF-FBSDEs}
In this section, we will adopt the induction approach to decouple the MF-FBSDEs \eqref{mffbsde} and derive a solution to Problem \ref{problem:1}.

For simplicity, let's define the following asymmetric Riccati equations,
\begin{align}\label{equ:101}
\left\{
\begin{aligned}
\Phi(\tau)=&Q+\mathcal{A}^{T}\Phi(\tau+1)\mathcal{A}+\sigma ^{2}C^{T}\Phi(\tau+1)C\\
&-[\bar{L}_{0}^{T}(\tau)+\mathcal{A}^{T}\varphi_{1}(\tau+1)\mathcal{B}_{0}]\bar{\mathbf{\Upsilon}} _{0}^{-1}(\tau)\bar{Y}_{0,0}(\tau)+\mathcal{A}^{T}\varphi_{1}(\tau+1)\mathcal{A},\\
\bar{\Phi}(\tau)=&\bar{Q}+(\mathcal{A}+\bar{\mathcal{A}})^{T}[\Phi(\tau+1)+\bar{\Phi}(\tau+1)+\sum_{j=1}^{h}\varphi_{j}(\tau+1)](\mathcal{A}+\bar{\mathcal{A}})\\
&-\mathcal{A}^{T}[\Phi(\tau+1)+\sum_{j=1}^{h}\varphi_{j}(\tau+1)]\mathcal{A}+\sigma ^{2}(C+\bar{C})^{T}\Phi(\tau+1)(C+\bar{C})\\
&-\sigma ^{2}C^{T}\Phi(\tau+1)C-\sum_{i=0}^{h}\Big[[L_{i}^{T}(\tau)\mathbf{I}_{i}+(\mathcal{A}+\bar{\mathcal{A}})^{T}\varphi_{i+1}(\tau+1)\\
&\times(\mathbf{B}_{i}+\mathbf{\bar{B}}_{i})]\sum_{j=i}^{h}\mathbf{\Upsilon} _{i}^{-1}(\tau)Y_{i,j}(\tau)-[\bar{L}_{i}^{T}(\tau)\mathbf{I}_{i}+\mathcal{A}^{T}\varphi_{i+1}(\tau+1)\mathbf{B}_{i}]\sum_{j=i}^{h}\mathbf{\bar{\Upsilon}} _{i}^{-1}(\tau)\bar{Y}_{i,j}(\tau)\Big],\\
\varphi_{j}(\tau)=&-\sum_{i=0}^{j}[\bar{L}_{i}^{T}(\tau)\mathbf{I}_{i}+\mathcal{A}^{T}\varphi_{i+1}(\tau+1)\mathbf{B}_{i}]\mathbf{\bar{\Upsilon}} _{i}^{-1}(\tau)\sum_{j=i}^{h}\bar{Y}_{i,j}(\tau)+\mathcal{A}^{T}\varphi_{j+1}(\tau+1)\mathcal{A}, 1\leq j\leq h,
\end{aligned}
\right.
\end{align}
with terminal conditions $\Phi(\Gamma+1), \bar{\Phi}(\Gamma+1)$ and $\varphi_{j}(\Gamma+1)=0,\varphi_{h+1}(\tau)=0$ for $1\leq j\leq h$ and $0\leq \tau\leq \Gamma.$ In the above, for $1\leq i\leq h,i+1\leq j\leq h,$

\begin{align}\label{equ:29}
\mathbf{\bar{\Upsilon}} _{i}(\tau)=&\mathbf{R}_{i}+\mathbf{B}_{i}^{T}[\Phi(\tau+1)+\sum_{j=1}^{h}\varphi_{j}(\tau+1)]\mathbf{B}_{i}+\sigma ^{2}\mathbf{D}_{i}\Phi(\tau+1)\mathbf{D}_{i},\notag\\
\mathbf{\Upsilon}_{i}(\tau)=&\mathbf{R}_{i}+\mathbf{\bar{R}}_{i}+(\mathbf{B}_{i}+\mathbf{\bar{B}}_{i})^{T}[\Phi(\tau+1)+\bar{\Phi}(\tau+1)+\sum_{j=1}^{h}\varphi_{j}(\tau+1)]\notag\\
&\times(\mathbf{B}_{i}+\mathbf{\bar{B}}_{i})+\sigma ^{2}(\mathbf{D}_{i}+\mathbf{\bar{D}}_{i})\Phi(\tau+1)(\mathbf{D}_{i}+\mathbf{\bar{D}}_{i}),\notag\\
\bar{Y}_{i,i}(\tau)=&\mathbf{B}_{i}^{T}[\Phi(\tau+1)+\sum_{j=1}^{h}\varphi_{j}(\tau+1)]\mathcal{A}+\sigma ^{2}\mathbf{D}_{i}^{T}\Phi(\tau+1)C,\notag\\
Y_{i,i}(\tau)=&(\mathbf{B}_{i}+\mathbf{\bar{B}}_{i})^{T}[\Phi(\tau+1)+\bar{\Phi}(\tau+1)+\sum_{j=1}^{h}\varphi_{j}(\tau+1)](\mathcal{A}+\bar{\mathcal{A}})\notag\\
&+\sigma ^{2}(\mathbf{D}_{i}+\mathbf{\bar{D}}_{i})^{T}\Phi(\tau+1)(C+\bar{C}),\notag\\
\bar{Y}_{i,j}(\tau)=&-\big\{\mathbf{B}_{i}^{T}[\Phi(\tau+1)+\sum_{m=1}^{h}\varphi_{m}(\tau+1)]\mathcal{B}_{j}+\sigma ^{2}\mathbf{D}_{i}^{T}\Phi(\tau+1)D_{j}\big\}\notag\\
&\times\mathbf{I}_{j}\mathbf{\bar{\Upsilon}}_{j}^{-1}(\tau)\sum_{m=j}^{h}\bar{Y}_{j,m}(\tau),\notag\\
Y_{i,j}(\tau)=&-\big\{(\mathbf{B}_{i}+\mathbf{\bar{B}}_{i})^{T}[\Phi(\tau+1)+\bar{\Phi}(\tau+1)+\sum_{m=1}^{h}\varphi_{m}(\tau+1)](\mathcal{B}_{j}+\bar{\mathcal{B}}_{j})\notag\\
&+\sigma ^{2}(\mathbf{D}_{i}+\mathbf{\bar{D}}_{i})^{T}\Phi(\tau+1)(D_{j}+\bar{D}_{j})\big\}\mathbf{I}_{j}\mathbf{\Upsilon}_{j}^{-1}(\tau)\sum_{m=j}^{h}Y_{j,m}(\tau), i+1\leq j\leq h,
\end{align}
where
\begin{align}
\mathbf{I}_{i}=&[0,\cdots,0,I_{m_{i}}], \mathbf{I}_{0}=I_{m_{0}},\notag
\end{align}
\begin{align}
L_{i}(\tau)=&(\mathcal{B}_{i}+\bar{\mathcal{B}}_{i})^{T}[\Phi(\tau+1)+\bar{\Phi}(\tau+1)+\sum_{j=1}^{h}\varphi_{j}(\tau+1)](\mathcal{A}+\bar{\mathcal{A}})\notag\\
&+\sigma ^{2}(D_{i}+\bar{D}_{i})^{T}\Phi(\tau+1)(C+\bar{C}),1\leq i\leq h,\notag
\end{align}
\begin{align}
\bar{L}_{i}(\tau)=&\mathcal{B}_{i}^{T}[\Phi(\tau+1)+\sum_{j=1}^{h}\varphi_{j}(\tau+1)]\mathcal{A}+\sigma ^{2}D_{i}^{T}\Phi(\tau+1)C,\notag
\end{align}
\begin{align}
L_{0}(\tau)=&(\mathcal{B}_{0}+\bar{\mathcal{B}}_{0})^{T}[\Phi(\tau+1)+\bar{\Phi}(\tau+1)](\mathcal{A}+\bar{\mathcal{A}})+\sigma ^{2}(D_{0}+\bar{D}_{0})^{T}\Phi(\tau+1)(C+\bar{C}),\notag
\end{align}
\begin{align}
\bar{L}_{0}(\tau)=&\mathcal{B}_{0}^{T}\Phi(\tau+1)\mathcal{A}+\sigma ^{2}D_{0}^{T}\Phi(\tau+1)C.\notag
\end{align}

Now it is the position to state the solution to Problem \ref{problem:1}.
\begin{theorem}\label{theorem:2}
Suppose Assumptions \ref{asss}-\ref{assumption:1} hold, for (\ref{equ:2}) and (\ref{equ:4}), Problem \ref{problem:1} is uniquely solvable if and only if $\mathbf{\Upsilon}_{i}(\tau)$  and $\mathbf{\bar{\Upsilon}}_{i}(\tau)$ are invertible, for $0\leq i\leq h,i\leq \tau\leq \Gamma,$ where $\mathbf{\Upsilon}_{i}(\tau)$ and $\mathbf{\bar{\Upsilon}}_{i}(\tau)$ are given in (\ref{equ:29}).

In this case, the unique optimal decentralized control strategy $v_{i}(\tau)$ can be calculated as
\begin{align}\label{equ:30}
v_{i}(\tau)=&\mathbf{I}_{i}\mathbf{V}_{i}(\tau)\notag\\
=&-\mathbf{I}_{i}\mathbf{\bar{\Upsilon}} _{i}^{-1}(\tau)\sum_{j=i}^{h}\bar{Y}_{i,j}(\tau)\hat{x}_{\tau/\tau-j}-\sum_{j=i}^{h}\mathbf{I}_{i}[\mathbf{\Upsilon} _{i}^{-1}(\tau)Y_{i,j}(\tau)-\mathbf{\bar{\Upsilon}} _{i}^{-1}(\tau)\bar{Y}_{i,j(\tau)}]\mathbb{E}x(\tau),i\leq \tau\leq \Gamma,
\end{align}
where $\hat{x}_{\tau/\tau-j}$ can be calculated from Lemma \ref{lemma:5}, and $Y_{i,j}(\tau),\bar{Y}_{i,j}(\tau)$ are given in (\ref{equ:29}). Furthermore, the corresponding optimal cost functional is given by
\begin{align}\label{equ:33} J_{\Gamma}^{*}(v)=&x^{T}(0)[\Phi(0)+\bar{\Phi}(0)+\sum_{j=1}^{h}\varphi_{j}(0)]x(0)+\sum_{i=0}^{h}\sum_{\tau=0}^{i-1}v_{i}^T(\tau)[\mathcal{R}_{i}+\bar{\mathcal{R}}_{i}]v_{i}(\tau).
\end{align}

The relationship between the state $x(\tau)$ and the costate $\theta(\tau)$ (i.e., the solution to MF-FBSDEs (\ref{equ:19}) and (\ref{equ:22})-(\ref{equ:24})) can be {given} as
\begin{align}\label{equ:32}
\theta(\tau)=&\Phi(\tau+1)x(\tau+1)+\bar{\Phi}(\tau+1)\mathbb{E}x(\tau+1)+\sum_{j=1}^{h}\varphi_{j}(\tau+1)\hat{x}_{\tau+1/\tau+1-j}, -1\leq \tau\leq \Gamma.
\end{align}
\end{theorem}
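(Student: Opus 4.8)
The plan is to verify \eqref{equ:32} by backward induction on $\tau$, running from $\tau=\Gamma$ down to $\tau=-1$, and to extract the optimal control law \eqref{equ:30} as a by-product of the inductive step. The base case $\tau=\Gamma$ holds by the terminal condition $\theta(\Gamma)=\Phi(\Gamma+1)x(\Gamma+1)+\bar\Phi(\Gamma+1)\mathbb{E}x(\Gamma+1)$ together with $\varphi_j(\Gamma+1)=0$. For the inductive step, I would assume \eqref{equ:32} holds at time $\tau$ and substitute it into the equilibrium conditions \eqref{equ:22}--\eqref{equ:24} and the costate recursion \eqref{equ:11}. Using the rewritten dynamics \eqref{equ:19} to express $x(\tau+1)$, $\mathbb{E}x(\tau+1)$ and the predictors $\hat{x}_{\tau+1/\tau+1-j}$ (via Lemma~\ref{lemma:5}) in terms of $x(\tau)$, $\mathbb{E}x(\tau)$, $\hat{x}_{\tau/\tau-j}$ and the control components $\mathbf{V}_i(\tau)$, $\tilde v_{j,i}(\tau)$, the equilibrium conditions become linear algebraic equations in these control variables. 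Solving \eqref{equ:22} for $\mathbf{V}_i(\tau)$ — taking conditional expectations with respect to $\mathcal{F}_i(\tau)$ and then ordinary expectations, to separate the "$\hat x$" part from the "$\mathbb{E}x$" part — is exactly where the matrices $\mathbf{\Upsilon}_i(\tau)$, $\mathbf{\bar\Upsilon}_i(\tau)$, $Y_{i,j}(\tau)$, $\bar Y_{i,j}(\tau)$ in \eqref{equ:29} arise, and invertibility of $\mathbf{\Upsilon}_i(\tau)$ and $\mathbf{\bar\Upsilon}_i(\tau)$ is precisely the condition under which these equations are uniquely solvable; this yields \eqref{equ:30}.

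The delicate point is the nested/coupled structure of the controllers: the component $\mathbf{V}_i(\tau)$ contains the predictors $\hat v_{j,i}(\tau)=\mathbb{E}[v_j(\tau)\mid\mathcal{F}_i(\tau)]$ for $j<i$, so \eqref{equ:22} for different $i$ are not independent. I would resolve this by an inner induction on $i$ from $i=h$ downward: equation \eqref{equ:24} with the substituted $\theta(\tau)$ determines the "innermost" quantities first, and the off-diagonal terms $Y_{i,j}(\tau)$, $\bar Y_{i,j}(\tau)$ with $j>i$ (defined recursively in \eqref{equ:29} through $\mathbf{\Upsilon}_j^{-1}$, $\mathbf{\bar\Upsilon}_j^{-1}$) record how the already-solved higher-index controls feed back into the equation for $\mathbf{V}_i(\tau)$. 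Once all control components at time $\tau$ are expressed affinely in $x(\tau)$, $\mathbb{E}x(\tau)$, $\hat x_{\tau/\tau-j}$, I substitute back into \eqref{equ:11}: collecting the coefficient of $x(\tau)$ gives the $\Phi(\tau)$ recursion, the coefficient of $\mathbb{E}x(\tau)$ (after taking $\mathbb{E}$ of the whole identity and using \eqref{equ:27}) gives $\bar\Phi(\tau)$, and the coefficients of the $\hat x_{\tau/\tau-j}$ give the $\varphi_j(\tau)$ recursions in \eqref{equ:101}; this closes the induction and simultaneously proves the "if" direction. For the "only if" direction, if some $\mathbf{\Upsilon}_i(\tau)$ or $\mathbf{\bar\Upsilon}_i(\tau)$ is singular one exhibits a nonzero perturbation $\Delta\mathbf{V}_i(\tau)$ in its kernel that leaves the equilibrium conditions satisfied, contradicting the uniqueness part of Lemma~\ref{lemma:2}; I would phrase this as: the map from controls to the residual of \eqref{equ:22}--\eqref{equ:24} has nontrivial kernel exactly when one of these matrices does, and Lemma~\ref{lemma:2} ties that to non-unique solvability of Problem~\ref{problem:1}.

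Finally, for the optimal cost \eqref{equ:33} I would use the standard completion-of-squares/telescoping argument: define the "value" $\mathbb{E}[x^T(\tau)\Phi(\tau)x(\tau)] + [\mathbb{E}x(\tau)]^T\bar\Phi(\tau)\mathbb{E}x(\tau) + \sum_{j}\hat x_{\tau/\tau-j}^T\varphi_j(\tau)\hat x_{\tau/\tau-j}$, show via the Riccati recursions \eqref{equ:101} and the orthogonality relations from Lemma~\ref{lemma:4} that its one-step increment equals the negative of the running cost at time $\tau$ evaluated along the optimal control, sum from $\tau=0$ to $\Gamma$, and use $\varphi_j(\Gamma+1)=0$ and the terminal data; the leftover terms $\sum_{i=0}^{h}\sum_{\tau=0}^{i-1}v_i^T(\tau)[\mathcal{R}_i+\bar{\mathcal{R}}_i]v_i(\tau)$ come from the arbitrarily prescribed initial controls $v_i(\tau)$, $0\le\tau\le i-1$, which are not optimized. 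The main obstacle throughout is bookkeeping: correctly tracking the predictors $\hat x_{\tau/\tau-j}$ across the recursion (they satisfy the different update rules in \eqref{equ:26} depending on whether $j<h$ or $j=h$) and verifying that the orthogonal decomposition of Lemma~\ref{lemma:4} makes all cross terms between $\mathbf{V}_i(\tau)$ and the $\tilde v_{j,i}(\tau)$ vanish, so that the quadratic form collapses to the stated block-diagonal structure.
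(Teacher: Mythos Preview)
Your proposal is essentially the paper's proof: backward induction on $\tau$ with an inner descent on $i$ from $h$ to $0$, taking first the ordinary expectation of the equilibrium condition to extract $\mathbb{E}\mathbf{V}_i(\tau)$ and then substituting back to get $\mathbf{V}_i(\tau)$, followed by plugging the resulting controls into the costate recursion \eqref{equ:11} to recover $\Phi(\tau),\bar\Phi(\tau),\varphi_j(\tau)$ and close the induction.

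The only substantive difference is in the optimal-cost step. The paper does \emph{not} introduce a value function of the form $\mathbb{E}[x^T\Phi x]+(\mathbb{E}x)^T\bar\Phi\,\mathbb{E}x+\sum_j \hat x^T\varphi_j\hat x$; instead it computes the telescoping increment $\mathbb{E}[x^T(\tau)\theta(\tau-1)-x^{T}(\tau+1)\theta(\tau)]$ directly, shows via \eqref{equ:11}, \eqref{equ:15} and the tower property that this equals the running cost at time $\tau$, and then sums. The two routes are equivalent through the ansatz \eqref{equ:32}, but the paper's version has the advantage of never treating $\varphi_j(\tau)$ as the matrix of a quadratic form: since the Riccati equations \eqref{equ:101} are explicitly asymmetric, your ``completion-of-squares'' phrasing would need extra care (the increment of $\hat x^T\varphi_j\hat x$ is not governed by a standard symmetric Lyapunov identity), whereas the inner-product telescope $\mathbb{E}[x^T\theta]$ sidesteps the symmetry issue entirely. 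If you carry out your version, you should either symmetrize or, more simply, switch to the paper's $x^T\theta$ device.
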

\begin{proof}
Please refer to the Appendix for detailed proof.
\end{proof}

\begin{remark}
In Theorem \ref{theorem:2}, it is shown that the gain matrices of the optimal decentralized control strategy can be calculated via some asymmetric Riccati equations, which can be calculated offline. Moreover, we prove that the optimal control strategy is a linear feedback of the optimal predictor $\hat{x}_{\tau/\tau-j}$ and the state mean $\mathbb{E}x(\tau)$, which is feasible in calculation. The obtained results are derived for the first time.
\end{remark}

\begin{remark}
From the main results of Theorem \ref{theorem:2}, it is not hard to verify that the obtained results include the results of \citep{zqf2019, qxz2020} as special cases. On one hand, with $h=0$, Problem \ref{problem:1} can be reduced as the control problem for mean-field system without delay in \citep{zqf2019}, and the optimal control with $h=0$ in Theorem \ref{theorem:2} is exactly the optimal control shown in \citep{zqf2019}. On the other hand, with $\bar{\mathcal{A}}=0,\bar{\mathcal{B}}_{i}=0,\bar{C}=0,\bar{D}_{i}=0$, Problem \ref{problem:1} turns into the case investigated in \citep{qxz2020}, and the optimal decentralized control strategy in Theorem \ref{theorem:2} can be presented the same as that in \citep{qxz2020}.
\end{remark}

\section{Numerical Example}
To illustrate the obtained results in Theorem \ref{theorem:2}, the following numerical example shall be given as below.

Without loss of generality, we consider (\ref{equ:2}) and (\ref{equ:4}) with
$n=2,h=2,m_{0}=m_{1}=m_{2}=2,\Gamma=5,$
\begin{align}
\mathcal{A}=&\begin{bmatrix}
0.6&0.3\\
0.4&0.2
\end{bmatrix},\bar{\mathcal{A}}=\begin{bmatrix}
0.2&0\\
0&-0.6
\end{bmatrix},
C=\begin{bmatrix}
0.6&0.3\\
0.4&0.2
\end{bmatrix},\bar{C}=\begin{bmatrix}
0.2&0\\
0&-0.6
\end{bmatrix},\notag\\
\mathcal{B}_{0}=&\begin{bmatrix}
0.3&0.2\\
0.4&-0.1
\end{bmatrix},\mathcal{B}_{1}=\begin{bmatrix}
0.1&0.2\\
0&0.1
\end{bmatrix},\mathcal{B}_{2}=\begin{bmatrix}
0.3&0.1\\
0.5&0.8
\end{bmatrix},\notag\\
D_{0}=&\begin{bmatrix}
0.3&0.2\\
0.4&-0.1
\end{bmatrix},D_{1}=\begin{bmatrix}
0.1&0.2\\
0&0.1
\end{bmatrix},D_{2}=\begin{bmatrix}
0.3&0.1\\
0.5&0.8
\end{bmatrix},\notag\\
\bar{\mathcal{B}}_{0}=&\begin{bmatrix}
-0.4&0.8\\
0.2&0.9
\end{bmatrix},\bar{\mathcal{B}}_{1}=\begin{bmatrix}
0&0.1\\
0&-1
\end{bmatrix},\bar{\mathcal{B}}_{2}=\begin{bmatrix}
-0.8&0\\
0&0.2
\end{bmatrix},\notag\\
\bar{D}_{0}=&\begin{bmatrix}
-0.4&0.8\\
0.2&0.9
\end{bmatrix},\bar{D}_{1}=\begin{bmatrix}
0&0.1\\
0&-1
\end{bmatrix},\bar{D}_{2}=\begin{bmatrix}
-0.8&0\\
0&0.2
\end{bmatrix},\notag\\
Q=&I_{2},\mathcal{R}_{0}=I_{2},\mathcal{R}_{1}=I_{2},\mathcal{R}_{2}=I_{2},\bar{Q}=I_{2},\bar{\mathcal{R}}_{0}=I_{2},\bar{\mathcal{R}}_{1}=I_{2},\bar{\mathcal{R}}_{2}=I_{2},\notag\\
\Phi(\Gamma&+1)=I_{2},\bar{\Phi}(\Gamma+1)=I_{2},
\varphi_{1}(\Gamma+1)=0,\varphi_{2}(\Gamma+1)=0.\notag
\end{align}

By using Theorem \ref{theorem:2}, $\mathbf{\Upsilon} _{i}(\tau), \mathbf{\bar{\Upsilon}} _{i}(\tau), Y_{i,j}(\tau), \bar{Y}_{i,j}(\tau), i=0,1,2,$ $0\leq \tau\leq 4$ can be calculated, and it can be found $\mathbf{\Upsilon} _{i}(\tau)$ and $\mathbf{\bar{\Upsilon}} _{i}(\tau)$ are invertible. On the other hand, since $Q,\bar{Q},\mathcal{R}_{i},\bar{\mathcal{R}}_{i},\Phi(\Gamma+1),\bar{\Phi}(\Gamma+1)$ given above are positive definite, we can verify $\mathbf{\Upsilon} _{i}(\tau),\mathbf{\bar{\Upsilon}} _{i}(\tau)\geq 0$ in (\ref{equ:29}).

Thus, from Theorem \ref{theorem:2}, we know Problem \ref{problem:1} can be uniquely solved, and the optimal decentralized control is given as:
\begin{align}
v_{0}(0)=&\begin{bmatrix}
-0.2932  & -0.1466\\
   -0.1631  & -0.0815
\end{bmatrix}x(0)+\begin{bmatrix}
0.2889  &  0.2390\\
   -0.2938  & -0.0064
\end{bmatrix}\mathbb{E}x(0),\notag\\
v_{0}(1)=&\begin{bmatrix}
 0.2487  &  0.1243\\
   -0.0205  & -0.0102
\end{bmatrix}\hat{x}_{1/0}+\begin{bmatrix}
 -0.5338 &  -0.2669\\
   -0.1607  & -0.0804
\end{bmatrix}x(1)\notag\\
&+\begin{bmatrix}
0.2800  &  0.2347\\
   -0.2946  &  0.0033
\end{bmatrix}\mathbb{E}x(1),\notag\\
v_{0}(2)=&\begin{bmatrix}
0.1567  &  0.0783\\
   -0.0369 &  -0.0185
\end{bmatrix}\hat{x}_{2/0}+\begin{bmatrix}
0.0230  &  0.0115\\
    0.0097 &   0.0049
\end{bmatrix}\hat{x}_{2/1}\notag\\
&+\begin{bmatrix}
-0.4693 &  -0.2346\\
   -0.1364  & -0.0682
\end{bmatrix}x(2)+\begin{bmatrix}
0.2857  &  0.2374\\
   -0.2911  & -0.0035
\end{bmatrix}\mathbb{E}x(2),\notag\\
v_{0}(3)=&\begin{bmatrix}
 0.2185  &  0.1093\\
   -0.0297  & -0.0149
\end{bmatrix}\hat{x}_{3/1}+\begin{bmatrix}
0.0264  &  0.0132\\
    0.0111  &  0.0055
\end{bmatrix}\hat{x}_{3/2}\notag\\
&+\begin{bmatrix}
-0.5207  & -0.2603\\
   -0.1558  & -0.0779
\end{bmatrix}x(3)+\begin{bmatrix}
0.2709  &  0.2303\\
   -0.2910  &  0.0017
\end{bmatrix}\mathbb{E}x(3),\notag\\
v_{0}(4)=&\begin{bmatrix}
0.1907  &  0.0953\\
   -0.0235  & -0.0117
\end{bmatrix}\hat{x}_{4/2}+\begin{bmatrix}
 0.0175 &   0.0087\\
    0.0075  &  0.0037
\end{bmatrix}\hat{x}_{4/3}\notag\\
&+\begin{bmatrix}
-0.4499  & -0.2249\\
   -0.1291  & -0.0645
\end{bmatrix}x(4)+\begin{bmatrix}
0.2351  &  0.2105\\
   -0.2957  &  0.0034
\end{bmatrix}\mathbb{E}x(4),\notag\\
v_{1}(1)=&\begin{bmatrix}
-0.0901 &  -0.0450\\
   -0.1823  & -0.0911
\end{bmatrix}\hat{x}_{1/0}+\begin{bmatrix}
 0.0487  &  0.0265\\
    0.1279 &  -0.0750
\end{bmatrix}\mathbb{E}x(1),\notag\\
v_{1}(2)=&\begin{bmatrix}
-0.0836 &  -0.0418\\
   -0.2131  & -0.1065
\end{bmatrix}\hat{x}_{2/1}+\begin{bmatrix}
0.0004  & -0.0002\\
    0.0373  &  0.0187
\end{bmatrix}\hat{x}_{2/0}\notag\\
&+\begin{bmatrix}
0.0444  &  0.0236\\
    0.1222  & -0.0786
\end{bmatrix}\mathbb{E}x(2),\notag\\
v_{1}(3)=&\begin{bmatrix}
-0.0936 &  -0.0468\\
   -0.2365  & -0.1182
\end{bmatrix}\hat{x}_{3/2}+\begin{bmatrix}
0.0067  &  0.0034\\
    0.0606  &  0.0303
\end{bmatrix}\hat{x}_{3/1}\notag\\
&+\begin{bmatrix}
0.0463  &  0.0250\\
    0.1224  & -0.0783
\end{bmatrix}\mathbb{E}x(3),\notag\\
v_{1}(4)=&\begin{bmatrix}
 -0.0804  & -0.0402\\
   -0.2054 &  -0.1027
\end{bmatrix}\hat{x}_{4/3}+\begin{bmatrix}
- 0.0070  &  0.0035\\
    0.0546  &  0.0273
\end{bmatrix}\hat{x}_{4/2}\notag\\
&+\begin{bmatrix}
 0.0352  &  0.0200\\
    0.1036  & -0.0840
\end{bmatrix}\mathbb{E}x(4),\notag\\
v_{2}(2)=&\begin{bmatrix}
 -0.2931 &  -0.1465\\
   -0.1580 &  -0.0790
\end{bmatrix}\hat{x}_{2/0}+\begin{bmatrix}
0.4550  &  0.3004\\
    0.0456  &  0.1841
\end{bmatrix}\mathbb{E}x(2),\notag\\
v_{2}(3)=&\begin{bmatrix}
-0.3054 &  -0.1527\\
   -0.1628  & -0.0814
\end{bmatrix}\hat{x}_{3/1}+\begin{bmatrix}
 0.4702  &  0.3065\\
    0.0468  &  0.1863
\end{bmatrix}\mathbb{E}x(3),\notag\\
v_{2}(4)=&\begin{bmatrix}
-0.2646 &  -0.1323\\
   -0.1468  & -0.0734
\end{bmatrix}\hat{x}_{4/2}+\begin{bmatrix}
0.4178 &   0.2763\\
    0.0341  &  0.1784
\end{bmatrix}\mathbb{E}x(4).\notag
\end{align}

In the following, we choose
\begin{align}
x(0)=&\begin{bmatrix}
x^{1}(0)\\
x^{2}(0)
\end{bmatrix}=\begin{bmatrix}
2\\
1
\end{bmatrix},
\mathbb{E}x(0)=\begin{bmatrix}
\mathbb{E}x^{1}(0)\\
\mathbb{E}x^{2}(0)
\end{bmatrix}=\begin{bmatrix}
2\\
1
\end{bmatrix},\notag\\
\omega(\tau)&\sim\mathcal{N}(0,1),\Gamma=100.\notag
\end{align}
By using the results of Theorem \ref{theorem:2}, the state trajectory $x(\tau)=\begin{bmatrix}
x^{1}(\tau)\\
x^{2}(\tau)
\end{bmatrix}$ for $0\leq \tau\leq 100$ can be obtained, which is depicted as in FIGURE \ref{figure}. From FIGURE \ref{figure}, it is clear that the system state converges to 0.

\begin{figure}[htbp]
\centering
\includegraphics[scale=0.5]{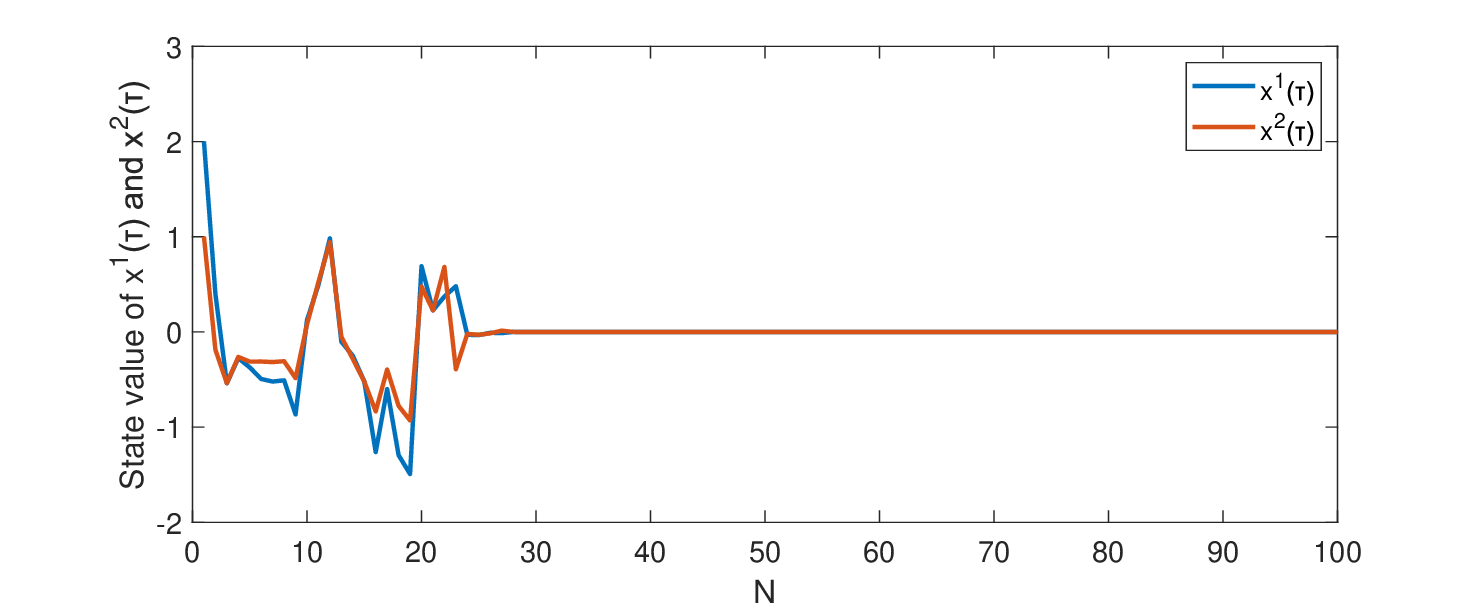}
\caption
{State trajectory $x(\tau)$ with the optimal decentralized control strategy.}
\label{figure}
\end{figure}

\section{Conclusion}

In this paper, we have investigated the decentralized control problem for the discrete-time mean field systems with multiple controllers of delayed information. Firstly, by the use of Pontryagin's  maximum principle, the necessary and sufficient solvable conditions for the decentralized control problem have been derived. Next, in order to handle with the asymmetric information pattern, the orthogonal decomposition approach has been proposed to decouple the associated MF-FBSDEs. Finally, the optimal decentralized control strategy has been derived, which is based on an asymmetric Riccati equation. For future research, we will extend the results of this paper to study the infinite time horizon case.

\section*{Acknowledgments}
This work was supported by National Natural Science Foundation of China under grant 61903210, Natural Science Foundation of Shandong Province under grant ZR2019BF002, China Postdoctoral Science Foundation under grants 2019M652324, 2021T140354, Qingdao Postdoctoral Application Research Project, Major Basic Research of Natural Science Foundation of Shandong Province under grant ZR2021ZD14.

\section*{Appendix: Proof of Theorem 1}
\begin{proof}
According to Lemma \ref{lemma:1}-Lemma \ref{lemma:2}, we know that the necessary and sufficient conditions for the unique solution of Problem \ref{problem:1} is that equilibrium condition (\ref{equ:15}) is uniquely solvable. Consequently, based on the induction method, we will solve the MF-FBSDEs \eqref{mffbsde} to find the solution of Problem \ref{problem:1}.

%We know that equilibrium condition (\ref{equ:15}) is uniquely solvable if and only if $\mathbf{\Upsilon}_{i}(k)$ and $\mathbf{\bar{\Upsilon}}_{i}(k)$ are invertible for $i=0,\cdots,d,k=i,\cdots,N.$ Subsequently, we will combine induction method to solve the optimal control strategy.

In the first place, for $\tau=\Gamma,$ we have $\theta(\Gamma)=\Phi(\Gamma+1)x(\Gamma+1)+\bar{\Phi}(\Gamma+1)\mathbb{E}x(\Gamma+1),$ then (\ref{equ:22}) with $i=h$ implies
\begin{align}\label{equ:34}
0=&\mathbf{R}_{h}\mathbf{V}_{h}(\Gamma)+\mathbf{\bar{R}}_{h}\mathbb{E}\mathbf{V}_{h}(\Gamma)+\mathbb{E}[\mathbf{B}_{h}^{T}(\Gamma)\theta (\Gamma)\mid \mathcal{F}_{h}(\Gamma)]+\mathbb{E}[\mathbf{\bar{B}}_{h}^{T}(\Gamma)\theta (\Gamma)]\notag\\
=&[\mathbf{B}_{h}^{T}\Phi(\Gamma+1)\mathcal{A}+\sigma ^{2}\mathbf{D}_{h}^{T}\Phi(\Gamma+1)C]\hat{x}_{\Gamma/\Gamma-h}+[\mathbf{R}_{h}+\mathbf{B}_{h}^{T}\Phi(\Gamma+1)\mathbf{B}_{h}+\sigma ^{2}\mathbf{D}_{h}^{T}\Phi(\Gamma+1)\mathbf{D}_{h}]\mathbf{V}_{h}(\Gamma)\notag\\
&+\Big[(\mathbf{B}_{h}+\mathbf{\bar{B}}_{h})^{T}[\Phi(\Gamma+1)+\bar{\Phi}(\Gamma+1)](\mathcal{A}+\bar{\mathcal{A}})-\mathbf{B}_{h}^{T}\Phi(\Gamma+1)\mathcal{A}\notag\\
&+\sigma ^{2}(\mathbf{D}_{h}+\mathbf{\bar{D}}_{h})^{T}\Phi(\Gamma+1)(C+\bar{C})-\sigma ^{2}\mathbf{D}_{h}^{T}\Phi(\Gamma+1)C\Big]\mathbb{E}x(\Gamma)\notag\\
&+\Big[\mathbf{\bar{R}}_{h}+(\mathbf{B}_{h}+\mathbf{\bar{B}}_{h})^{T}[\Phi(\Gamma+1)+\bar{\Phi}(\Gamma+1)](\mathbf{B}_{h}+\mathbf{\bar{B}}_{h})-\mathbf{B}_{h}^{T}\Phi(\Gamma+1)\mathbf{B}_{h}\notag\\
&+\sigma ^{2}(\mathbf{D}_{h}+\mathbf{\bar{D}}_{h})^{T}\Phi(\Gamma+1)(\mathbf{D}_{h}+\mathbf{\bar{D}}_{h})-\sigma ^{2}\mathbf{D}_{h}^{T}\Phi(\Gamma+1)\mathbf{D}_{h}\Big]\mathbb{E}\mathbf{V}_{h}(\Gamma)\notag\\
=&\mathbf{\bar{\Upsilon }}_{h}(\Gamma)\mathbf{V}_{h}(\Gamma)+[\mathbf{\Upsilon}_{h}(\Gamma)-\mathbf{\bar{\Upsilon }}_{h}(\Gamma)]\mathbb{E}\mathbf{V}_{h}(\Gamma)+\bar{Y}_{h,h}(\Gamma)\hat{x}_{\Gamma/\Gamma-h}+[Y_{h,h}(\Gamma)-\bar{Y}_{h,h}(\Gamma)]\mathbb{E}x(\Gamma).
\end{align}
By taking the mathematical expectation of the (\ref{equ:34}), we have
\begin{align}\label{equ:35}
0=&\Big[\mathbf{R}_{h}+\mathbf{\bar{R}}_{h}+(\mathbf{B}_{h}+\mathbf{\bar{B}}_{h})^{T}[\Phi(\Gamma+1)+\bar{\Phi}(\Gamma+1)](\mathbf{B}_{h}+\mathbf{\bar{B}}_{h})+\sigma ^{2}(\mathbf{D}_{h}+\mathbf{\bar{D}}_{h})^{T}\Phi(\Gamma+1)(\mathbf{D}_{h}+\mathbf{\bar{D}}_{h})\Big]\mathbb{E}\mathbf{V}_{h}(\Gamma)\notag\\
&+\Big[(\mathbf{B}_{h}+\mathbf{\bar{B}}_{h})^{T}[\Phi(\Gamma+1)+\bar{\Phi}(\Gamma+1)](\mathcal{A}+\bar{\mathcal{A}})+\sigma ^{2}(\mathbf{D}_{h}+\mathbf{\bar{D}}_{h})^{T}\Phi(\Gamma+1)(C+\bar{C})\big]\mathbb{E}x(\Gamma)\notag\\
=&\mathbf{\Upsilon} _{h}(\Gamma)\mathbb{E}\mathbf{V}_{h}(\Gamma)+Y_{h,h}(\Gamma)\mathbb{E}x(\Gamma).
\end{align}
{\color{blue}Further, $\mathbb{E}\mathbf{V}_{h}(\Gamma)$ can be calculated as}
\begin{align}\label{equ:36}
\mathbb{E}\mathbf{V}_{h}(\Gamma)=-\mathbf{\Upsilon}_{h}^{-1}(\Gamma)Y_{h,h}(\Gamma)\mathbb{E}x(\Gamma).
\end{align}
Substitute (\ref{equ:36}) into (\ref{equ:34}), we have
\begin{align}\label{equ:37}
0=&\mathbf{\bar{\Upsilon }}_{h}(\Gamma)\mathbf{V}_{h}(\Gamma)+[\mathbf{\Upsilon}_{h}(\Gamma)-\mathbf{\bar{\Upsilon }}_{h}(\Gamma)]\mathbb{E}\mathbf{V}_{h}(\Gamma)\notag\\
&+\bar{Y}_{h,h}(\Gamma)\hat{x}_{\Gamma/\Gamma-h}+[Y_{h,h}(\Gamma)-\bar{Y}_{h,h}(\Gamma)]\mathbb{E}x(\Gamma)\notag\\
=&\mathbf{\bar{\Upsilon }}_{h}(\Gamma)\mathbf{V}_{h}(\Gamma)-[\mathbf{\Upsilon}_{h}(\Gamma)-\mathbf{\bar{\Upsilon }}_{h}(\Gamma)]\mathbf{\Upsilon}_{h}^{-1}(\Gamma)Y_{h,h}(\Gamma)\mathbb{E}x(\Gamma)\notag\\
&+\bar{Y}_{h,h}(\Gamma)\hat{x}_{\Gamma/\Gamma-h}+[Y_{h,h}(\Gamma)-\bar{Y}_{h,h}(\Gamma)]\mathbb{E}x(\Gamma)\notag\\
=&\mathbf{\bar{\Upsilon }}_{h}(\Gamma)\mathbf{V}_{h}(\Gamma)+\bar{Y}_{h,h}(\Gamma)\hat{x}_{\Gamma/\Gamma-h}+[\mathbf{\bar{\Upsilon }}_{h}(\Gamma)\mathbf{\Upsilon}_{h}^{-1}(\Gamma)Y_{h,h}(\Gamma)-\bar{Y}_{h,h}(\Gamma)]\mathbb{E}x(\Gamma).
\end{align}

According to Lemma \ref{lemma:1}, it is noted that the equilibrium condition (\ref{equ:15}) for $\tau=\Gamma$ is uniquely solvable if and only if $\mathbf{\Upsilon}_{h}(\Gamma)$ and $\mathbf{\bar{\Upsilon}}_{h}(\Gamma)$ are invertible.
The unique optimal control $v_{h}(\Gamma)$ can be obtained as
\begin{align}\label{equ:39}
v_{h}(\Gamma)=&\mathbf{I}_{h}\mathbf{V}_{h}(\Gamma)=\mathbf{I}_{h}\begin{bmatrix}
\hat{v}_{0,h}(\Gamma)\\
\vdots\\
\hat{v}_{h-1,h}(\Gamma)\\
v_{h}(\Gamma)
\end{bmatrix}\notag\\
=&-\mathbf{I}_{h}\mathbf{\bar{\Upsilon }}_{h}^{-1}(\Gamma)\bar{Y}_{h,h}(\Gamma)\hat{x}_{\Gamma/\Gamma-h}-\mathbf{I}_{h}[\mathbf{\Upsilon}_{h}^{-1}(\Gamma)Y_{h,h}(\Gamma)-\mathbf{\bar{\Upsilon }}_{h}^{-1}(\Gamma)\bar{Y}_{h,h}(\Gamma)]\mathbb{E}x(\Gamma).
\end{align}

Next, we will calculate $V_{h-1}(\Gamma).$ In fact, $x(\Gamma+1)$ can be rewritten as
\begin{align}\label{equ:40}
x(\Gamma+1)=&[\mathcal{A}+\omega(\Gamma)C]x(\Gamma)+[\bar{\mathcal{A}}+\omega(\Gamma)\bar{C}]\mathbb{E}x(\Gamma)\notag\\
&+[\mathbf{B}_{h-1}+\omega(\Gamma)\mathbf{D}_{h-1}]\mathbf{V}_{h-1}(\Gamma)+[\mathbf{\bar{B}}_{h-1}+\omega(\Gamma)\mathbf{\bar{D}}_{h-1}] \mathbb{E}\mathbf{V}_{h-1}(\Gamma)\notag\\
&+\sum_{j=0}^{h-2}\Big[[\mathcal{B}_{j}+\omega(\Gamma)D_{j}]\tilde{v}_{j,i}(\tau)+[\bar{\mathcal{B}}_{j}+\omega(\Gamma)\bar{D}]\mathbb{E}\tilde{v}_{j,i}(\tau)\Big]\notag\\
&+[\mathcal{B}_{h}+\omega(\Gamma)D_{h}]v_{h}(\Gamma)+[\bar{\mathcal{B}}_{h}+\omega(\Gamma)\bar{D}_{h}]\mathbb{E}v_{h}(\Gamma).
\end{align}

Then, applying (\ref{equ:40}) to (\ref{equ:22}) for $i=h-1$, we can obtain
\begin{align}\label{equ:41}
0=&\mathbf{R}_{h-1}\mathbf{V}_{h-1}(\Gamma)+\mathbf{\bar{R}}_{h-1}\mathbb{E}\mathbf{V}_{h-1}(\Gamma)+\mathbb{E}[\mathbf{B}_{h-1}^{T}(\Gamma)\theta (\Gamma)\mid \mathcal{F}_{h-1}(\Gamma)]+\mathbb{E}[\mathbf{\bar{B}}_{h-1}^{T}(\Gamma)\theta (\Gamma)]\notag\\
=&\mathbf{\bar{\Upsilon }}_{h-1}(\Gamma)\mathbf{V}_{h-1}(\Gamma)+[\mathbf{\Upsilon}_{h-1}(\Gamma)-\mathbf{\bar{\Upsilon }}_{h-1}(\Gamma)]\mathbb{E}\mathbf{V}_{h-1}(\Gamma)\notag\\
&+\bar{Y}_{h-1,h-1}(\Gamma)\hat{x}_{\Gamma/\Gamma-h+1}+[Y_{h-1,h-1}(\Gamma)-\bar{Y}_{h-1,h-1}(\Gamma)]\mathbb{E}x(\Gamma)\notag\\
&+[\mathbf{B}_{h-1}^{T}\Phi(\Gamma+1)\mathcal{B}_{h}+\sigma ^{2}\mathbf{D}_{h-1}^{T}\Phi(\Gamma+1)D_{h}]v_{h}(\Gamma)\notag\\
&+\Big[(\mathbf{B}_{h-1}+\mathbf{\bar{B}}_{h-1})^{T}[\Phi(\Gamma+1)+\bar{\Phi}(\Gamma+1)](\mathcal{B}_{h}+\bar{\mathcal{B}}_{h})-\mathbf{B}_{h-1}^{T}\Phi(\Gamma+1)\mathcal{B}_{h}\notag\\
&+\sigma ^{2}(\mathbf{D}_{h-1}+\mathbf{\bar{D}}_{h-1})^{T}\Phi(\Gamma+1)(D_{h}+\bar{D}_{h})-\sigma ^{2}\mathbf{D}_{h-1}^{T}\Phi(\Gamma+1)D_{h}\Big]\mathbb{E}v_{h}(\Gamma)\notag\\
=&\mathbf{\bar{\Upsilon }}_{h-1}(\Gamma)\mathbf{V}_{h-1}(\Gamma)+[\mathbf{\Upsilon}_{h-1}(\Gamma)-\mathbf{\bar{\Upsilon }}_{h-1}(\Gamma)]\mathbb{E}\mathbf{V}_{h-1}(\Gamma)\notag\\
&+\bar{Y}_{h-1,h-1}(\Gamma)\hat{x}_{\Gamma/\Gamma-h+1}+\bar{Y}_{h-1,h}(\Gamma)\hat{x}_{\Gamma/\Gamma-h}\notag\\
&+\Big[[Y_{h-1,h-1}(\Gamma)-\bar{Y}_{h-1,h-1}(\Gamma)]+[Y_{h-1,h}(\Gamma)-\bar{Y}_{h-1,h}(\Gamma)]\Big]\mathbb{E}x(\Gamma).
\end{align}

Similarly, by taking the mathematical expectation of the (\ref{equ:41}), there holds
\begin{align}\label{equ:42}
0=&\mathbf{\Upsilon} _{h-1}(\Gamma)\mathbb{E}\mathbf{V}_{h-1}(\Gamma)+[Y_{h-1,h-1}(\Gamma)+Y_{h-1,h}(\Gamma)]\mathbb{E}x(\Gamma),
\end{align}
it yields that
\begin{align}\label{equ:43}
\mathbb{E}\mathbf{V}_{h-1}(\Gamma)=&-\mathbf{\Upsilon} _{h-1}^{-1}(\Gamma)[Y_{h-1,h-1}(\Gamma)+Y_{h-1,h}(\Gamma)]\mathbb{E}x(\Gamma).
\end{align}
Substitute (\ref{equ:43}) into (\ref{equ:41}), we have
\begin{align}\label{equ:44}
0=&\mathbf{\bar{\Upsilon }}_{h-1}(\Gamma)\mathbf{V}_{h-1}(\Gamma)+[\mathbf{\Upsilon}_{h-1}(\Gamma)-\mathbf{\bar{\Upsilon }}_{h-1}(\Gamma)]\mathbb{E}\mathbf{V}_{h-1}(\Gamma)\notag\\
&+\bar{Y}_{h-1,h-1}(\Gamma)\hat{x}_{\Gamma/\Gamma-h+1}+\bar{Y}_{h-1,h}(\Gamma)\hat{x}_{\Gamma/\Gamma-h}\notag\\
&+\Big[[Y_{h-1,h-1}(\Gamma)-\bar{Y}_{h-1,h-1}(\Gamma)]+[Y_{h-1,h}(\Gamma)-\bar{Y}_{h-1,h}(\Gamma)]\Big]\mathbb{E}x(\Gamma)\notag\\
=&\mathbf{\bar{\Upsilon }}_{h-1}(\Gamma)\mathbf{V}_{h-1}(\Gamma)-[\mathbf{\Upsilon}_{h-1}(\Gamma)-\mathbf{\bar{\Upsilon }}_{h-1}(\Gamma)]\mathbf{\Upsilon} _{h-1}^{-1}(\Gamma)[Y_{h-1,h-1}(\Gamma)\notag\\
&+Y_{h-1,h}(\Gamma)]\mathbb{E}x(\Gamma)+\bar{Y}_{h-1,h-1}(\Gamma)\hat{x}_{\Gamma/\Gamma-h+1}+\bar{Y}_{h-1,h}(\Gamma)\hat{x}_{\Gamma/\Gamma-h}\notag\\
&+\Big[[Y_{h-1,h-1}(\Gamma)-\bar{Y}_{h-1,h-1}(\Gamma)]+[Y_{h-1,h}(\Gamma)-\bar{Y}_{h-1,h}(\Gamma)]\Big]\mathbb{E}x(\Gamma)\notag\\
=&\mathbf{\bar{\Upsilon }}_{h-1}(\Gamma)\mathbf{V}_{h-1}(\Gamma)+\bar{Y}_{h-1,h-1}(\Gamma)\hat{x}_{\Gamma/\Gamma-h+1}+\bar{Y}_{h-1,h}(\Gamma)\hat{x}_{\Gamma/\Gamma-h}\notag\\
&+\Big[\mathbf{\bar{\Upsilon }}_{h-1}(\Gamma)\mathbf{\Upsilon}_{h-1}^{-1}(\Gamma)[Y_{h-1,h-1}(\Gamma)+Y_{h-1,h}(\Gamma)]-[\bar{Y}_{h-1,h-1}(\Gamma)+\bar{Y}_{h-1,h}(\Gamma)]\Big]\mathbb{E}x(\Gamma).
\end{align}

Similar to the analysis above (\ref{equ:34}), (\ref{equ:15}) is uniquely solvable if and only if $\mathbf{\Upsilon}_{h-1}(\Gamma)$ and $\mathbf{\bar{\Upsilon}}_{h-1}(\Gamma)$ are invertible. Apparently, $v_{h-1}(\Gamma)$ can be derived.

The remainder of the argument is analogous to the above analysis, so by repeating the above procedures step by step, it can be verified that $\mathbf{V}_{i}(\Gamma)$ can be given by
\begin{align}\label{equ:45}
\mathbf{V}_{i}(\Gamma)=&-\mathbf{\bar{\Upsilon}} _{i}^{-1}(\Gamma)\sum_{j=i}^{h}\bar{Y}_{i,j}(\Gamma)\hat{x}_{\Gamma/\Gamma-j}-\sum_{j=i}^{h}[\mathbf{\Upsilon} _{i}^{-1}(\Gamma)Y_{i,j}(\Gamma)-\mathbf{\bar{\Upsilon}} _{i}^{-1}(\Gamma)\bar{Y}_{i,j}(\Gamma)]\mathbb{E}x(\Gamma),
\end{align}
and $v_{i}(\Gamma)$ is given by
\begin{align}\label{equ:46}
v_{i}(\Gamma)=&-\mathbf{I}_{i}\mathbf{\bar{\Upsilon}} _{i}^{-1}(\Gamma)\sum_{j=i}^{h}\bar{Y}_{i,j}(\Gamma)\hat{x}_{\Gamma/\Gamma-j}-\sum_{j=i}^{h}\mathbf{I}_{i}[\mathbf{\Upsilon} _{i}^{-1}(\Gamma)Y_{i,j}(\Gamma)-\mathbf{\bar{\Upsilon}} _{i}^{-1}(\Gamma)\bar{Y}_{i,j}(\Gamma)]\mathbb{E}x(\Gamma),
\end{align}
where $Y_{i,i}(\Gamma)$,$\bar{Y}_{i,i}(\Gamma)$,$Y_{i,j}(\Gamma)$,$\bar{Y}_{i,j}(\Gamma)$ satisfy the following relationships
\begin{align}\label{equ:60}
\left\{
\begin{aligned}
\bar{Y}_{i,i}(\Gamma)=&\mathbf{B}_{i}^{T}\Phi(\Gamma+1)\mathcal{A}+\sigma ^{2}\mathbf{D}_{i}^{T}\Phi(\Gamma+1)C,\\
Y_{i,i}(\Gamma)=&(\mathbf{B}_{i}+\mathbf{\bar{B}}_{i})^{T}[\Phi(\Gamma+1)+\bar{\Phi}(\Gamma+1)](\mathcal{A}+\bar{\mathcal{A}})+\sigma ^{2}(\mathbf{D}_{i}+\mathbf{\bar{D}}_{i})^{T}\Phi(\Gamma+1)(C+\bar{C}),\\
\bar{Y}_{i,j}(\Gamma)=&-[\mathbf{B}_{i}^{T}\Phi(\Gamma+1)\mathcal{B}_{j}+\sigma ^{2}\mathbf{D}_{i}^{T}\Phi(\Gamma+1)D_{j}]\mathbf{I}_{j}\mathbf{\bar{\Upsilon}}_{j}^{-1}\sum_{m=j}^{h}\bar{Y}_{j,m}(\Gamma),\\
Y_{i,j}(\Gamma)=&-[(\mathbf{B}_{i}+\mathbf{\bar{B}}_{i})^{T}[\Phi(\Gamma+1)+\bar{\Phi}(\Gamma+1)](\mathcal{B}_{j}+\bar{\mathcal{B}}_{j})\\
&+\sigma ^{2}(\mathbf{D}_{i}+\mathbf{\bar{D}}_{i})^{T}\Phi(\Gamma+1)(D_{j}+\bar{D}_{j})]\mathbf{I}_{j}\mathbf{\Upsilon}_{j}^{-1}\sum_{m=j}^{h}Y_{j,m}(\Gamma),i+1\leq j\leq h.\\
\end{aligned}
\right.
\end{align}
In other words, $v_i(\Gamma)$ in \eqref{equ:30} has been verified for $i=0,\cdots,h$.

Afterwards, we will concentrate on calculating $\theta(\Gamma-1)$. From (\ref{equ:11}), we can obtain that
\begin{align}\label{equ:47}
\theta (\Gamma-1)=&Qx(\Gamma)+\bar{Q}\mathbb{E}x(\Gamma)+\mathbb{E}[\mathcal{A}^{T}(\Gamma)\theta (\Gamma)\mid \mathcal{F}_{0}(\Gamma)]+\mathbb{E}[\bar{\mathcal{A}}^{T}(\Gamma)\theta (\Gamma)]\notag\\
=&\big[Q+\mathcal{A}^{T}\Phi(\Gamma+1)\mathcal{A}+\sigma ^{2}C^{T}\Phi(\Gamma+1)C\big]x(\Gamma)\notag\\
&+\Big[\bar{Q}+(\mathcal{A}+\bar{\mathcal{A}})^{T}[\Phi(\Gamma+1)+\bar{\Phi}(\Gamma+1)](\mathcal{A}+\bar{\mathcal{A}})-\mathcal{A}^{T}\Phi(\Gamma+1)\mathcal{A}+\sigma ^{2}(C+\bar{C})^{T}\Phi(\Gamma+1)(C+\bar{C})\notag\\
&-\sigma ^{2}C^{T}\Phi(\Gamma+1)C+\sum_{i=0}^{h}[\bar{L}_{i}^{T}(\Gamma)\mathbf{I}_{i}\mathbf{\bar{\Upsilon}} _{i}^{-1}(\Gamma)\sum_{j=i}^{h}\bar{Y}_{i,j}(\Gamma)-L_{i}^{T}(\Gamma)\mathbf{I}_{i}\mathbf{\Upsilon} _{i}^{-1}(\Gamma)\sum_{j=i}^{h}Y_{i,j}(\Gamma)]\Big]\mathbb{E}x(\Gamma)\notag\\
&-\sum_{i=0}^{h}\bar{L}_{i}^{T}(\Gamma)\mathbf{I}_{i}\mathbf{\bar{\Upsilon}} _{i}^{-1}(\Gamma)\sum_{j=i}^{h}\bar{Y}_{i,j}(\Gamma)\hat{x}_{\Gamma/\Gamma-j}\notag\\
=&\Phi(\Gamma)x(\Gamma)+\bar{\Phi}(\Gamma)\mathbb{E}x(\Gamma)+\sum_{j=1}^{h}\varphi_{j}(\Gamma)\hat{x}_{\Gamma/\Gamma-j},
\end{align}
where we have used the relationships
\begin{align}\label{equ:48}
\sum_{i=0}^{h}[\bar{L}_{i}^{T}(\Gamma)&\mathbf{I}_{i}\mathbf{\bar{\Upsilon}} _{i}^{-1}(\Gamma)\sum_{j=i}^{h}\bar{Y}_{i,j}(\Gamma)-L_{i}^{T}(\Gamma)\mathbf{I}_{i}\mathbf{\Upsilon} _{i}^{-1}(\Gamma)\sum_{j=i}^{h}Y_{i,j}(\Gamma)]\notag\\
=\sum_{j=0}^{h}\Big[\sum_{i=0}^{j}[&\bar{L}_{i}^{T}(\Gamma)\mathbf{I}_{i}\mathbf{\bar{\Upsilon}} _{i}^{-1}(\Gamma)\bar{Y}_{i,j}(\Gamma)-L_{i}^{T}(\Gamma)\mathbf{I}_{i}\mathbf{\Upsilon} _{i}^{-1}(\Gamma)Y_{i,j}(\Gamma)]\Big],
\end{align}
and
\begin{align}\label{equ:49}
\sum_{i=0}^{h}\bar{L}_{i}^{T}(\Gamma)\mathbf{I}_{i}\mathbf{\bar{\Upsilon}} _{i}^{-1}&(\Gamma)\sum_{j=i}^{h}\bar{Y}_{i,j}(\Gamma)=\sum_{j=0}^{h}[\sum_{i=0}^{j}\bar{L}_{i}^{T}(\Gamma)\mathbf{I}_{i}\mathbf{\bar{\Upsilon}} _{i}^{-1}(\Gamma)\bar{Y}_{i,j}(\Gamma)],
\end{align}
which mean that \eqref{equ:32} has been proved for $\tau=\Gamma-1$.

In order to adopt the induction method, we assume that the assertions in Theorem \ref{theorem:2} also hold when $\xi+1\leq \tau\leq \Gamma$. In other words, for $\xi+1\leq \tau\leq \Gamma$, it is assumed that the following assertions hold:
\begin{itemize}
  \item [1)](\ref{equ:15}) is uniquely solvable if and only if $\mathbf{\Upsilon}_{i}(\tau)$ and $\mathbf{\bar{\Upsilon}}_{i}(\tau)$ are invertible, and the control strategy $v_{i}(\tau)$ satisfies (\ref{equ:30});
  \item [2)]The Riccati equations (\ref{equ:101}) hold;
  \item [3)]$\theta(\tau-1)$ is given by (\ref{equ:32}).
\end{itemize}

In the following, we will verify (\ref{equ:30}) for $\tau=\xi$. In fact, from (\ref{equ:22}), we have
\begin{align}\label{equ:50}
0=&\mathbf{R}_{h}\mathbf{V}_{h}(\xi)+\mathbf{\bar{R}}_{h}\mathbb{E}\mathbf{V}_{h}(\xi)+\mathbb{E}[\mathbf{B}_{h}^{T}(\xi)\theta (\xi)\mid \mathcal{F}_{h}(\xi)]+\mathbb{E}[\mathbf{\bar{B}}_{h}^{T}(\xi)\theta(\xi)]\notag\\
=&\mathbf{R}_{h}\mathbf{V}_{h}(\xi)+\mathbf{\bar{R}}_{h}\mathbb{E}\mathbf{V}_{h}(\xi)+\mathbb{E}\Big[[\mathbf{B}_{h}+\omega(\xi)\mathbf{D}_{h}]^{T}[\Phi(\xi+1)x(\xi+1)\notag\\
&+\bar{\Phi}(\xi+1)\mathbb{E}x(\xi+1)+\sum_{j=1}^{h}\varphi_{j}(\xi+1)\hat{x}_{\xi+1/\xi+1-j}]\mid \mathcal{F}_{h}(\xi)\Big]+\mathbb{E}\Big[[\mathbf{\bar{B}}_{h}+\omega(\xi)\mathbf{\bar{D}}_{h}]^{T}\notag\\
&\times[\Phi(\xi+1)x(\xi+1)+\bar{\Phi}(\xi+1)\mathbb{E}x(\xi+1)+\sum_{j=1}^{h}\varphi_{j}(\xi+1)\hat{x}_{\xi+1/\xi+1-j}]\Big]\notag\\
=&\mathbf{\bar{\Upsilon }}_{h}(\xi)\mathbf{V}_{h}(\xi)+[\mathbf{\Upsilon}_{h}(\xi)-\mathbf{\bar{\Upsilon }}_{h}(\xi)]\mathbb{E}\mathbf{V}_{h}(\xi)+\bar{Y}_{h,h}(\xi)\hat{x}_{\xi/\xi-h}+[Y_{h,h}(\xi)-\bar{Y}_{h,h}(\xi)]\mathbb{E}x(\xi).
\end{align}
Hence, (\ref{equ:15}) is uniquely solvable if and only if $\mathbf{\Upsilon}_{h}(\xi)$ and $\mathbf{\bar{\Upsilon}}_{h}(\xi)$ are invertible. In this case, $\mathbf{V}_{h}(\xi)$ and $v_{h}(\xi)$ in  (\ref{equ:30}) can be calculated from (\ref{equ:50}), respectively.

In the following, we will verify $v_{h-1}(\xi)$. Similar to the arguments in (\ref{equ:41}), it follows that

\begin{align}\label{equ:51}
0=&\mathbf{R}_{h-1}\mathbf{V}_{h-1}(\xi)+\mathbf{\bar{R}}_{h-1}\mathbb{E}\mathbf{V}_{h-1}(\xi)+\mathbb{E}[\mathbf{B}_{h-1}^{T}(\xi)\theta (\xi)\mid \mathcal{F}_{h-1}(\xi)]+\mathbb{E}[\mathbf{\bar{B}}^{T}_{h-1}(\xi)\theta(\xi)]\notag\\
=&\mathbf{R}_{h-1}\mathbf{V}_{h-1}(\xi)+\mathbf{\bar{R}}_{h-1}\mathbb{E}\mathbf{V}_{h-1}(\xi)+\mathbb{E}\Big[[\mathbf{B}_{h-1}+\omega(\xi)\mathbf{D}_{h-1}]^{T}[\Phi(\xi+1)x(\xi+1)\notag\\
&+\bar{\Phi}(\xi+1)\mathbb{E}x(\xi+1)+\sum_{j=1}^{h}\varphi_{j}(\xi+1)\hat{x}_{\xi+1/\xi+1-j}]\mid \mathcal{F}_{h-1}(\xi)\Big]\notag\\
&+\mathbb{E}\Big[[\mathbf{\bar{B}}_{h-1}+\omega(\xi)\mathbf{\bar{D}}_{h-1}]^{T}[\Phi(\xi+1)x(\xi+1)+\bar{\Phi}(\xi+1)\mathbb{E}x(\xi+1)+\sum_{j=1}^{h}\varphi_{j}(\xi+1)\hat{x}_{\xi+1/\xi+1-j}]\Big]\notag\\
=&\mathbf{\bar{\Upsilon}}_{h-1}(\xi)\mathbf{V}_{h-1}(\xi)+[\mathbf{\Upsilon}_{h-1}(\xi)-\mathbf{\bar{\Upsilon}}_{h-1}(\xi)]\mathbb{E}\mathbf{V}_{h-1}+\bar{Y}_{h-1,h-1}(\xi)\hat{x}_{\xi/\xi-h+1}+\bar{Y}_{h-1,h}(\xi)\hat{x}_{\xi/\xi-h}\notag\\
&+\Big[[Y_{h-1,h-1}(\xi)-\bar{Y}_{h-1,h-1}(\xi)]+[Y_{h-1,h}(\xi)-\bar{Y}_{h-1,h}(\xi)]\Big]\mathbb{E}x(\xi),
\end{align}
i.e., $v_{h-1}(\xi)$ can be proved from (\ref{equ:51}).

Following the derivations of (\ref{equ:41})-(\ref{equ:60}) and (\ref{equ:50}), it can be verified that $\mathbf{V}_{i}(\xi)$ satisfies (\ref{equ:30}), and $v_{i}(\xi)$ can be expressed as (\ref{equ:30}).

Subsequently, to end the induction method, $\theta(\xi-1)$ will be calculated.
\begin{align}\label{equ:52}
\theta (\xi&-1)=Qx(\xi)+\bar{Q}\mathbb{E}x(\xi)+\mathbb{E}[\mathcal{A}^{T}(\xi)\theta (\xi)\mid \mathcal{F}_{0}(\xi)]+\mathbb{E}[\bar{\mathcal{A}}^{T}(\xi)\theta (\xi)]\notag\\
=&Qx(\xi)+\bar{Q}\mathbb{E}x(\xi)+\mathbb{E}\Big[[\mathcal{A}+\omega (\xi)C]^{T}\big[\Phi(\xi+1)x(\xi+1)+\bar{\Phi}(\xi+1)\mathbb{E}x(\xi+1)+\sum_{j=1}^{h}\varphi_{j}(\xi+1)\hat{x}_{\xi+1/\xi+1-j}\big]\mid \mathcal{F}_{0}(\xi)\Big]\notag\\
&+\mathbb{E}\Big[[\bar{\mathcal{A}}+\omega (\xi)\bar{C}]^{T} \big[\Phi(\xi+1)x(\xi+1)+\bar{\Phi}(\xi+1)\mathbb{E}x(\xi+1)+\sum_{j=1}^{h}\varphi_{j}(\xi+1)\hat{x}_{\xi+1/\xi+1-j}\big]\Big]\notag\\
=&\Big[Q+\mathcal{A}^{T}\Phi(\xi+1)\mathcal{A}+\sigma ^{2}C^{T}\Phi(\xi+1)C+\mathcal{A}^{T}\varphi_{1}(\xi+1)\mathcal{A}-[\bar{L}_{0}^{T}(\xi)+\mathcal{A}^{T}\varphi_{1}(\xi+1)\mathbf{B}_{0}]\mathbf{\bar{\Upsilon}}_{0}^{-1}(\xi)\bar{Y}_{0,0}(\xi)\Big]x(\xi)\notag\\
&+\bigg[\bar{Q}+(\mathcal{A}+\bar{\mathcal{A}})^{T}[\Phi(\xi+1)+\bar{\Phi}(\xi+1)+\sum_{j=1}^{h}\varphi_{j}(\xi+1)](\mathcal{A}+\bar{\mathcal{A}})-\mathcal{A}^{T}[\Phi(\xi+1)+\sum_{j=1}^{h}\varphi_{j}(\xi+1)]\mathcal{A}\notag\\
&+\sigma ^{2}(C+\bar{C})^{T}\Phi(\xi+1)(C+\bar{C})-\sigma ^{2}C^{T}\Phi(\xi+1)C-\sum_{i=0}^{h}\Big[[L_{i}^{T}(\xi)\mathbf{I}_{i}+(\mathcal{A}+\bar{\mathcal{A}})^{T}\varphi_{i+1}(\xi+1)(\mathbf{B}_{i}+\mathbf{\bar{B}}_{i})]\notag\\
&\times\sum_{j=i}^{h}\mathbf{\Upsilon} _{i}^{-1}(\xi)Y_{i,j}(\xi)-[\bar{L}_{i}^{T}(\xi)\mathbf{I}_{i}+\mathcal{A}^{T}\varphi_{i+1}(\xi+1)\mathbf{B}_{i}]\sum_{j=i}^{h}\mathbf{\bar{\Upsilon}} _{i}^{-1}(\xi)\bar{Y}_{i,j}(\xi)\Big]\bigg]\mathbb{E}x(\xi)\notag\\
&-\sum_{j=1}^{h}\Big[\sum_{i=0}^{j}[\bar{L}_{i}^{T}(\xi)\mathbf{I}_{i}+\mathcal{A}^{T}\varphi_{i+1}(\xi+1)\mathbf{B}_{i}]\mathbf{\bar{\Upsilon}} _{i}^{-1}(\xi)\bar{Y}_{i,j}(\xi)-\mathcal{A}^{T}\varphi_{j+1}(\xi+1)\mathcal{A}\Big]\hat{x}_{\xi/\xi-j}\notag\\
=&\Phi(\xi)x(\xi)+\bar{\Phi}(\xi)\mathbb{E}x(\xi)+\sum_{j=1}^{h}\varphi_{j}(\xi)\hat{x}_{\xi/\xi-j},
\end{align}
which leads to (\ref{equ:32}). This ends the induction method.

As a consequence, we have shown that the unique solvability of (\ref{equ:15}) is equivalent to the invertibility of $\mathbf{\Upsilon}_{i}(\tau)$ and $\mathbf{\bar{\Upsilon}}_{i}(\tau)$. In other words, Problem \ref{problem:1} is uniquely solvable if and only if $\mathbf{\Upsilon}_{i}(\tau)$ and $\mathbf{\bar{\Upsilon}}_{i}(\tau)$ are invertible for $0\leq i\leq h,i\leq \tau\leq \Gamma.$
Furthermore, the unique optimal control strategy can be presented as (\ref{equ:30}).

Finally, we shall calculate the optimal cost functional with the optimal control $v_{i}(\tau)$ in (\ref{equ:30}). Actually, it follows that:
\begin{align}\label{equ:53}
\mathbb{E}[x^{T}&(\tau)\theta(\tau-1)-x^{T}(\tau+1)\theta(\tau)]\notag\\
=&E\bigg[x^{T}(\tau)\Big[Qx(\tau)+\bar{Q}\mathbb{E}x(\tau)+\mathbb{E}[\mathcal{A}^{T}(\tau)\theta(\tau)\mid \mathcal{F}_{0}(\tau)]+\mathbb{E}[\bar{\mathcal{A}}^{T}(\tau)\theta(\tau)]\Big]\notag\\
&-\big\{\mathcal{A}(\tau)x(\tau)+\bar{\mathcal{A}}(\tau)\mathbb{E}x(\tau)+\sum_{i=0}^{h}[\mathcal{B}_{i}(\tau)v_{i}(\tau)+\bar{\mathcal{B}}_{i}(\tau)\mathbb{E}v_{i}(\tau)]\big\}^{T}\theta(\tau)\bigg]\notag\\
=&\mathbb{E}\bigg[x^{T}(\tau)Qx(\tau)+[\mathbb{E}x(\tau)]^{T}\bar{Q}\mathbb{E}x(\tau)-\sum_{i=0}^{h}v_{i}^{T}(\tau)\Big[\mathbb{E}[\mathcal{B}_{i}^{T}(\tau)\theta(\tau)\mid \mathcal{F}_{i}(\tau)]+\mathbb{E}[\bar{\mathcal{B}}_{i}^{T}(\tau)\theta(\tau)]\Big]\bigg]\notag\\
=&\mathbb{E}\bigg[x^{T}(\tau)Qx(\tau)+[\mathbb{E}x(\tau)]^{T}\bar{Q}\mathbb{E}x(\tau)+\sum_{i=0}^{h}\Big[v_{i}^T(\tau)\mathcal{R}_{i}v_{i}(\tau)+[\mathbb{E}v_{i}(\tau)]^T\bar{\mathcal{R}}_{i}\mathbb{E}v_{i}(\tau)\Big]\bigg].
\end{align}

Then adding from $0$ to $\Gamma$ on both sides of (\ref{equ:53}), there holds that
\begin{align}\label{equ:54}
\mathbb{E}[x^{T}&(0)\theta(-1)-x^{T}(\Gamma+1)\theta(\Gamma)]\notag\\
=&E\Big[x^{T}(0)\theta(-1)-x^{T}(\Gamma+1)[\Phi(\Gamma+1)x(\Gamma+1)+\bar{\Phi}(\Gamma+1)\mathbb{E}x(\Gamma+1)]\Big]\notag\\
=&\sum_{\tau=0}^{\Gamma}\mathbb{E}\Big[x^T(\tau)Qx(\tau)+[\mathbb{E}x(\tau)]^T\bar{Q}\mathbb{E}x(\tau)\Big]+\sum_{i=0}^{h}\sum_{\tau=i}^{\Gamma}\Big[v_{i}^T(\tau)\mathcal{R}_{i}v_{i}(\tau)+[\mathbb{E}v_{i}(\tau)]^T\bar{\mathcal{R}}_{i}\mathbb{E}v_{i}(\tau)\Big].
\end{align}
Moreover, (\ref{equ:4}) can be rewritten as
\begin{align} J_{\Gamma}(v)=&\sum_{\tau=0}^{i-1}\mathbb{E}\Big[x^T(\tau)Qx(\tau)+[\mathbb{E}x(\tau)]^T\bar{Q}\mathbb{E}x(\tau)+\sum_{i=0}^{h}\big[v_{i}^T(\tau)\mathcal{R}_{i}v_{i}(\tau)\notag\\
&+[\mathbb{E}v_{i}(\tau)]^T\bar{\mathcal{R}}_{i}\mathbb{E}v_{i}(\tau)\big]\Big]+\sum_{\tau=i}^{\Gamma}\mathbb{E}\Big[x^T(\tau)Qx(\tau)+[\mathbb{E}x(\tau)]^T\bar{Q}\mathbb{E}x(\tau)\notag\\
&+\sum_{i=0}^{h}\big[v_{i}^T(\tau)\mathcal{R}_{i}v_{i}(\tau)+[\mathbb{E}v_{i}(\tau)]^T\bar{\mathcal{R}}_{i}\mathbb{E}v_{i}(\tau)\big]\Big]\notag\\
&+\mathbb{E}[x^T(\Gamma+1)\Phi(\Gamma+1)x(\Gamma+1)]+[\mathbb{E}x(\Gamma+1)]^T\bar{\Phi}(\Gamma+1)\mathbb{E}x(\Gamma+1).\notag
\end{align}
Subsequently, by using (\ref{equ:54}), we have
\begin{align} J_{\Gamma}(v)=&\mathbb{E}[x^{T}(0)\theta(-1)]+\sum_{i=0}^{h}\sum_{\tau=0}^{i-1}\mathbb{E}\Big[v_{i}^T(\tau)\mathcal{R}_{i}v_{i}(\tau)+[\mathbb{E}v_{i}(\tau)]^T\bar{\mathcal{R}}_{i}\mathbb{E}v_{i}(\tau)\Big].\notag
\end{align}
Finally, the optimal cost functional is given, with the optimal control $v_{i}(\tau)$ in (\ref{equ:30}), we can obtain that
\begin{align} J_{\Gamma}^{*}(v)=&\mathbb{E}[x^{T}(0)\theta(-1)]+\sum_{i=0}^{h}\sum_{\tau=0}^{i-1}\mathbb{E}\Big[v_{i}^T(\tau)\mathcal{R}_{i}v_{i}(\tau)+[\mathbb{E}v_{i}(\tau)]^T\bar{\mathcal{R}}_{i}\mathbb{E}v_{i}(\tau)\Big]\notag\\
=&\mathbb{E}\Big[x^{T}(0)\Phi(0)x(0)+[\mathbb{E}x(0)]^{T}\bar{\Phi}(0)\mathbb{E}x(0)+x^{T}(0)\sum_{j=1}^{h}\varphi_{j}(0)\hat{x}_{0/-j}\Big]\notag\\
&+\sum_{i=0}^{h}\sum_{\tau=0}^{i-1}\mathbb{E}\Big[v_{i}^T(\tau)\mathcal{R}_{i}v_{i}(\tau)+[\mathbb{E}v_{i}(\tau)]^T\bar{\mathcal{R}}_{i}\mathbb{E}v_{i}(\tau)\Big]\notag\\
=&x^{T}(0)[\Phi(0)+\bar{\Phi}(0)+\sum_{j=1}^{h}\varphi_{j}(0)]x(0)+\sum_{i=0}^{h}\sum_{\tau=0}^{i-1}v_{i}^T(\tau)[\mathcal{R}_{i}+\bar{\mathcal{R}}_{i}]v_{i}(\tau),\notag
\end{align}
which is (\ref{equ:33}). The proof is complete.
\end{proof}

\end{document}